\newcommand{\excise}[1]{}
\newtheorem{thm}{Theorem}[section]
\newtheorem{lemma}[thm]{Lemma}
\newtheorem{cor}[thm]{Corollary}
\newtheorem{prop}[thm]{Proposition}
\newtheorem{question}[thm]{Question}
\theoremstyle{definition}
\newtheorem{example}[thm]{Example}
\newtheorem{remark}[thm]{Remark}
\newtheorem{defn}[thm]{Definition}
\newtheorem{notation}[thm]{Notation}
\numberwithin{equation}{section}
\newcommand{\ring}[1]{\ensuremath{\mathbb{#1}}}
\renewcommand\>{\rangle}
\newcommand\<{\langle}
\newcommand\NN{\ring{N}}
\newcommand\QQ{\ring{Q}}
\newcommand\RR{\ring{R}}
\newcommand\ZZ{\ring{Z}}
\newcommand\til{\mathord\sim}
\def\ol#1{{\overline {#1}}}
\def\wh#1{{\widehat {#1}}}
\DeclareMathOperator\Ap{Ap} 
\DeclareMathOperator\supp{supp} 
\DeclareMathOperator\rk{rk}
\newcommand{\mathbigger}[1]{\mathlarger{\mathlarger{#1}}}
\begin{document}

\mbox{}
\title[Numerical semigroups, polyhedra, and posets III]{Numerical semigroups, polyhedra, and posets III:\ minimal presentations and face dimension}

\author[T.~Gomes]{Tara Gomes}
\address{School of Mathematics\\University of Minnesota\\Minneapolis, MN 55455}
\email{gomes072@umn.edu}

\author[C.~O'Neill]{Christopher O'Neill}
\address{Mathematics and Statistics Department\\San Diego State University\\San Diego, CA 92182}
\email{cdoneill@sdsu.edu}

\author[E.~Torres Davila]{Eduardo Torres Davila}
\address{School of Mathematics\\University of Minnesota\\Minneapolis, MN 55455}
\email{torre680@umn.edu}

\date{\today}

\begin{abstract}
This paper is the third in a series of manuscripts that examine the combinatorics of the Kunz polyhedron $P_m$, whose positive integer points are in bijection with numerical semigroups (cofinite subsemigroups of $\mathbb Z_{\ge 0}$) whose smallest positive element is $m$.  The faces of $P_m$ are indexed by a family of finite posets (called Kunz posets) obtained from the divisibility posets of the numerical semigroups lying on a given face.  In this paper, we characterize to what extent the minimal presentation of a numerical semigroup can be recovered from its Kunz poset.  In doing so, we prove that all numerical semigroups lying on the interior of a given face of $P_m$ have identical minimal presentation cardinality, and we provide a combinatorial method of obtaining the dimension of a face from its corresponding Kunz poset.  
\end{abstract}

\maketitle


\section{Introduction}
\label{sec:intro}

A \emph{numerical semigroup} is a cofinite subset $S \subseteq \NN$ of the non-negative integers that is closed under addition and contains $0$.  Numerical semigroups are often specified using a set of generators $n_0 < \cdots < n_k$, i.e.,
\[
S = \<n_0, \ldots, n_k\> = \{a_1n_1 + \cdots + a_kn_k : a_i \in \NN\}.
\]
The \emph{Ap\'ery set} of a nonzero element $m \in S$ is the set
\[
\Ap(S;m) = \{n \in S : n - m \notin S\}
\]
of minimal elements of $S$ within each equivalence class modulo $m$.  Since $S$ is cofinite, we~are guaranteed $|\!\Ap(S;m)| = m$, and that $\Ap(S;m)$ contains exactly one element in each equivalence class modulo $m$. The elements of $\Ap(S;m)$ are partially ordered by divisibility, that is, $a \preceq a'$ whenever $a' - a \in \Ap(S;m)$; we call this the \emph{Ap\'ery poset} of~$S$ when $m = n_0$ (the \emph{multiplicity} of $S$).

Numerous recent papers have centered around a family of rational polyhedra whose integer points are in bijection with certain numerical semigroups~\cite{alhajjarkunz, wilfmultiplicity,kaplanwilfconj,kunz,kunznew,kunzcoords}.  For each $m \ge 2$, the \emph{Kunz polyhedron} $P_m$ is a pointed rational cone, translated from the origin, whose positive integer points are in bijection with the numerical semigroups of multiplicity $m$ (we defer precise definitions to Section~\ref{sec:background}).  
This manuscript is the third in a series examining a combinatorial description of the faces of $P_m$~\cite{kunzfaces2,kunzfaces1}.  Given a numerical semigroup $S$ with multiplicity $m$, the \emph{Kunz poset} of $S$ is the partially ordered set with ground set $\ZZ_m$ obtained by replacing each element $a$ of the Ap\'ery poset $\Ap(S;m)$ with its equivalence class $\ol a$ in $\ZZ_m$.  In~\cite{kunzfaces1}, it was shown that the faces of $P_m$ are indexed by a family of posets that, for a given face $F$, coincides with the Kunz poset of any numerical semigroup corresponding to an integer point interior to~$F$.  

One of the primary ways of studying a numerical semigroup $S$ is via a minimal presentation $\rho \subset \NN^{k+1} \times \NN^{k+1}$, each element of which is a pair of factorizations that represents a minimal \emph{relation} or \emph{trade} between the minimal generators of $S$.  In this paper, we begin by introducing, in Section~\ref{sec:kunzminpres}, a \emph{minimal presentation of a Kunz poset}, and subsequently arrive at two main results:
\begin{itemize}
\item 
we obtain a combinatorial solution to the problem of computing face dimension in $P_m$ (Section~\ref{sec:facedim}); and

\item
we prove the cardinality of a minimal presentation of $S$ depends only on its Kunz poset, and thus is fixed within each face of the Kunz polyhedron (Section~\ref{sec:minpressize}).  

\end{itemize}
One consequence of the second item above is a new algorithm for computing a minimal presentation of a numerical semigroup, outlined in Remark~\ref{r:minpresalgorithm}.  We also introduce a Sage package \texttt{KunzPoset} for interfacing between the faces of the Kunz polyhedra, their associated Kunz posets, and the numerical semigroups they contain; see Section~\ref{sec:sagepackage} for an overview of its functionality.

\section{Background}
\label{sec:background}

In the first half of this section, we recall the definition of, and several well-known structural results concerning, minimal presentations of numerical semigroups (for a thorough treatment, see \cite[Chapter~9]{numerical}).  In the second half, we recall basic definitions from polyhedral geometry (see~\cite{ziegler} for a thorough introduction) and define the Kunz polyhedron $P_m$ and a related polyhedron from~\cite{kunzfaces1}.

Throughout this paper, all semigroups are assumed to be commutative and reduced.  

\begin{defn}\label{d:minimalpresentations}
Fix a finitely generated semigroup $S = \<n_0, \ldots, n_k\>$, written additively.  
A \emph{factorization} of an element $n \in S$ is an expression 
$$n = z_0n_0 + \cdots + z_kn_k$$
of $n$ as a sum of generators of $S$, and the \emph{set of factorizations} of $n$ is the set
$$\mathsf Z_S(n) = \{z \in \NN^{k+1} : n = z_0n_0 + \cdots + z_kn_k\}$$
viewed as a subset of $\NN^{k+1}$.  The \emph{factorization homomorphism} of $S$ is the function $\varphi_S:\NN^{k+1} \to S$ given by
$$\varphi_S(z_0, \ldots, z_k) =  z_0n_0 + \cdots + z_kn_k$$
sending each $(k+1)$-tuple to the element of $S$ it is a factorization of.  The \emph{kernel} of $\varphi_S$ is the equivalence relation $\til = \ker \varphi_S$ that sets $z \sim z'$ whenever $\varphi_S(z) = \varphi_S(z')$.  Each such relation $z \sim z'$ is called a \emph{trade} of $S$.  The kernel $\til$ is in fact a \emph{congruence}, meaning that $z \sim z'$ implies $(z + z'') \sim (z' + z'')$ for all $z, z', z'' \in \NN^{k+1}$; see~\cite{congruencesurvey,fingenmon} for a survey on the role of congruences in this context.  A~subset $\rho \subset \ker\varphi_S$, viewed as a subset of $\NN^{k+1} \times \NN^{k+1}$, is a \emph{presentation} of $S$ if $\ker\varphi_S$ is the smallest congruence on $\NN^{k+1}$ containing $\rho$; see \cite[Propsition~8.4]{numerical} for a thorough description of the smallest congruence containing a given set of relations.  We~say $\rho$ is a \emph{minimal presentation} if it is minimal with respect to containment among all presentations for $S$.  
\end{defn}

It is known that every finitely generated semigroup $S$ has a finite minimal presentation, and that all minimal presentations of $S$ have equal cardinality.  Theorem~\ref{t:allminimalpresentations} provides a combinatorial characterization of the minimal presentations of $S$ in the case where $S$ is cancellative, a consequence of \cite[Lemma~2.1]{deltabetti} also surveyed for numerical semigroups in~\cite{numericalappl}.  

\begin{defn}\label{d:factorizationgraph}
Fix a subset $Z \subset \NN^d$.  The \emph{factorization graph} of $Z$ is the graph $\nabla_Z$ whose vertices are the elements of $Z$ in which two tuples $z, z' \in Z$ are connected by an edge if $z_i > 0$ and $z_i' > 0$ for some $i$.  If $S$ is a finitely generated semigroup and $n \in S$, then we write $\nabla_n = \nabla_Z$ for $Z = \mathsf Z_S(n)$.  
\end{defn}

\begin{thm}\label{t:allminimalpresentations}
Fix a finitely generated cancellative semigroup $S = \<n_0, \ldots, n_k\>$.  
A~set $\rho \subset \ker \varphi_S$ is a presentation of $S$ if and only if for every $n \in S$, a connected graph is obtained from $\nabla_n$ by adding an edge for each pair of factorizations of $n$ in $\rho$.  Furthermore, a presentation $\rho$ is minimal if and only if for every $n \in S$, the number of connected components in $\nabla_n$ is one more than the number of relations in $\rho$ containing factorizations of $n$.  
\end{thm}

A \emph{rational polyhedron} $P \subset \RR^d$ is the set of solutions to a finite list of linear inequalities with rational coefficients, that is,
$$
P = \{x \in \RR^d : Ax \le b\}
$$
for some rational matrix $A$ and vector $b$. If none of the inequalities can be omitted without altering $P$, we call this list the \emph{$H$-description} or \emph{facet description} of $P$ (such a list of inequalities is unique up to reordering and scaling by positive constants).  The~inequalities appearing in the H-description of $P$ are called \emph{facet inequalities} of~$P$.

Given a facet inequality $a_1x_1 + \cdots + a_dx_d \le b$ of $P$, the intersection of $P$ with the equation $a_1x_1 + \cdots + a_dx_d = b$ is called a \emph{facet} of $P$. A \emph{face} $F$ of $P$ is a subset of $P$ equal to the intersection of some collection facets of $P$.  The set of facets containing $F$ is called the \emph{H-description} or \emph{facet description} of $F$.  The \emph{dimension} of a face $F$ is the dimension $\dim(F)$ of the affine linear span of $F$.  The \emph{relative interior} of a face $F$ is the set of points in $F$ that do not also lie in a face of dimension strictly smaller than $F$ (or, equivalently, do not lie in a proper face of $F$).  We say $F$ is a \emph{vertex} if $\dim(F) = 0$, and \emph{edge} if $\dim(F) = 1$ and $F$ is bounded, a \emph{ray} if $\dim(F) = 1$ and $F$ is unbounded, and a \emph{ridge} if $\dim(F) = d - 2$.

If the origin is the unique point lying in the intersection of all facets of $P$ (or, equivalently, if $b = 0$ in the H-description of $P$), then we call $P$ a \emph{pointed cone}.
Separately, we say $P$ is a \emph{polytope} if $P$ is bounded.  If $P$ is a pointed cone, then any face $F$ equals the non-negative span of the rays of $P$ it contains, and if~$P$ is a polytope, then any face $F$ equals the convex hull of the set of vertices of $P$ it contains; in each case, we call this the \emph{V-description} of $F$.

A \emph{partially ordered set} (or \emph{poset}) is a set $Q$ equipped with a \emph{partial order} $\preceq$, that is, a reflexive, antisymmetric, and transitive relation.  We say $q$ \emph{covers} $q'$ if $q' \prec q$ and there is no intermediate element $q''$ with $q' \prec q'' \prec q$.  If $(Q, \preceq)$ has a unique minimal element $0 \in Q$, the \emph{atoms} of $Q$ are the elements that cover $0$.  The set of faces of a polyhedron $P$ forms a poset under containment that is a \emph{lattice} (i.e., every element has a unique greatest common divisor and least common multiple) and is \emph{graded}, where the height function is given by dimension.  If $P$ is a cone, then every face of $P$ equals the sum of some collection of extremal rays and the intersection of some collection of facets, meaning the face lattice of $P$ is both \emph{atomic} and \emph{coatomic}.

\begin{defn}\label{d:kunzcoords}
Fix $m \in \ZZ_{\ge 2}$, and a numerical semigroup $S$ containing $m$. Write
$$
\Ap(S;m) = \{0, a_1, \ldots, a_{m-1}\},
$$
where $a_i = mx_i + i$ for each $i =
1, \ldots, m-1$.  We refer to the tuples $(a_1, \ldots, a_{m-1})$ and $(x_1,
\ldots, x_{m-1})$ as the \emph{Ap\'ery tuple}/\emph{Ap\'ery coordinates} and the
\emph{Kunz tuple}/\emph{Kunz coordinates} of $S$, respectively. \end{defn}

\begin{defn}\label{d:kunzandgroupcone}
Fix a finite Abelian group $G$, and let $m = |G|$. The \emph{group cone}
$\mathcal C(G) \subset \RR_{\ge 0}^{m-1}$ is the pointed cone with facet inequalities
$$
\begin{array}{r@{}c@{}l@{\qquad}l}
x_i + x_j &{}\ge{}& x_{i+j} & \text{for } i, j \in G \setminus \{0\} \text{ with } i + j \ne 0,
\end{array}
$$
where the coordinates of $\RR^{m-1}$ are indexed by $G \setminus \{0\}$.
Additionally, for each integer $m \ge 2$, let $P_m$ denote the translation of
$\mathcal C(\ZZ_m)$ with vertex $(-\tfrac{1}{m}, \ldots, -\tfrac{m-1}{m})$,
whose facets are given by
$$
\begin{array}{r@{}c@{}l@{\qquad}l}
x_i + x_j &{}\ge{}& x_{i+j} & \text{for } 1 \le i \le j \le m - 1 \text{ with } i + j < m, \text{ and} \\
x_i + x_j + 1 &{}\ge{}& x_{i+j-m} & \text{for } 1 \le i \le j \le m - 1 \text{ with } i + j > m.
\end{array}
$$
We refer to $P_m$ as the \emph{Kunz polyhedron}.
\end{defn}

Parts~(a) and~(b) of the following theorem appear in \cite{kunz} and \cite{kunzfaces1}, respectively.

\begin{thm}\label{t:kunzlatticepts}
Fix an integer $m \ge 2$.
\begin{enumerate}[(a)]
\item
The set of all Kunz tuples of numerical semigroups containing $m$ coincides with
the set of integer points in $P_m$.

\item
The set of all Ap\'ery tuples of numerical semigroups containing $m$ coincides
with the set of integer points $(a_1, \ldots, a_{m-1})$ in $\mathcal C(\ZZ_m)$
with $a_i \equiv i \bmod m$ for every $i$.

\end{enumerate}
\end{thm}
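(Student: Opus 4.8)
The plan is to prove part (b) directly from the definition of the Apéry set, and then deduce part (a) by the affine change of coordinates $a_i = mx_i + i$ relating Apéry and Kunz tuples. For (b) I would prove the two inclusions separately.

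For the forward inclusion, let $S$ be a numerical semigroup containing $m$ and write $\Ap(S;m) = \{0, a_1, \ldots, a_{m-1}\}$, so that $a_i$ is the least element of $S$ in the residue class $i$ modulo $m$; in particular $a_i \equiv i \pmod m$ and $a_i \in \NN$, giving the congruence and non-negativity conditions for free. To verify the cone inequalities, fix $i, j \in \ZZ_m \setminus \{0\}$ with $i + j \ne 0$. Since $a_i, a_j \in S$ and $S$ is closed under addition, $a_i + a_j \in S$ lies in the class $i + j$, and because $a_{i+j}$ is the least element of $S$ in that class we get $a_i + a_j \ge a_{i+j}$, which is exactly a defining inequality of $\mathcal C(\ZZ_m)$. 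Hence the Apéry tuple is an integer point of $\mathcal C(\ZZ_m)$ of the required form.

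For the reverse inclusion, I would reconstruct a semigroup from a given tuple. Suppose $(a_1, \ldots, a_{m-1})$ is an integer point of $\mathcal C(\ZZ_m)$ with $a_i \equiv i \pmod m$, set $a_0 = 0$, and define
$$S = \{n \in \NN : n \ge a_{n \bmod m}\}.$$
The main work is checking that $S$ is a numerical semigroup containing $m$ with the prescribed Apéry tuple, and the crux—where the cone inequalities enter—is closure under addition: if $n, n' \in S$ lie in classes $i$ and $j$, then $n + n' \ge a_i + a_j \ge a_{(i+j) \bmod m}$, the last step being a cone inequality when $i + j \ne 0$ and trivial when $i + j \equiv 0$ since $a_0 = 0$; thus $n + n' \in S$. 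Cofiniteness is immediate because each class contains all sufficiently large integers, we have $0, m \in S$ since $a_0 = 0$, and for each $i$ the element $a_i$ is the least member of $S$ in its class while $a_i - m \notin S$, so $\Ap(S;m) = \{0, a_1, \ldots, a_{m-1}\}$. (Here $a_i \ge 0$ together with $a_i \equiv i$ forces $a_i \ge i > 0$ for $i \ne 0$, so the tuple genuinely records an Apéry set.)

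Finally, part (a) follows from part (b) via the substitution $a_i = mx_i + i$, which is a bijection between integer Kunz tuples $(x_1, \ldots, x_{m-1})$ and integer Apéry tuples satisfying $a_i \equiv i \pmod m$. Substituting into each cone inequality $a_i + a_j \ge a_{(i+j)\bmod m}$ and splitting on whether $i + j < m$ or $i + j > m$ produces precisely the two families of facet inequalities of $P_m$, so this substitution carries $\mathcal C(\ZZ_m)$ onto $P_m$ and matches integer points with integer points; composing with the bijection of part (b) identifies the integer points of $P_m$ with the Kunz tuples of numerical semigroups containing $m$. The only step demanding genuine care is the reverse inclusion in (b)—verifying closure of the reconstructed set $S$—but this reduces cleanly to the cone inequalities, and everything else is bookkeeping with residue classes.
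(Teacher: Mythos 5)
Your proof is correct, but there is nothing in the paper itself to compare it against: Theorem~\ref{t:kunzlatticepts} is stated as background, with part~(a) attributed to \cite{kunz} and part~(b) to \cite{kunzfaces1}, and no proof appears in this manuscript. Your argument is essentially the standard one from those references. The forward inclusion in (b) is exactly the observation that Ap\'ery elements are minimal in their residue classes, so closure under addition yields $a_i + a_j \ge a_{i+j}$; the reverse inclusion via $S = \{n \in \NN : n \ge a_{n \bmod m}\}$ correctly isolates the one place the cone inequalities are needed (closure under addition), and the remaining verifications --- cofiniteness, $0, m \in S$, and $\Ap(S;m) = \{0, a_1, \ldots, a_{m-1}\}$ --- are routine, as you say. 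One small point worth making explicit: in the closure estimate $n + n' \ge a_i + a_j \ge a_{(i+j) \bmod m}$ you should also note the cases $i = 0$ or $j = 0$, where the second inequality holds with equality because $a_0 = 0$; these are not covered by the cone inequalities (which require $i, j \ne 0$) nor by your case $i + j \equiv 0$. Your deduction of (a) from (b) is also sound: since $P_m$ is by definition the translate of $\mathcal C(\ZZ_m)$ by the vertex $(-\tfrac{1}{m}, \ldots, -\tfrac{m-1}{m})$, the substitution $a_i = m x_i + i$ is a bijection between integer points of $P_m$ and integer points of $\mathcal C(\ZZ_m)$ satisfying $a_i \equiv i \bmod m$, and it carries the inequalities $a_i + a_j \ge a_{(i+j) \bmod m}$ onto the two families of facet inequalities of $P_m$ exactly as you compute, splitting on whether $i + j < m$ or $i + j > m$.
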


In view of Theorem~\ref{t:kunzlatticepts}, given a face $F \subset \mathcal C(\ZZ_m)$, we say $F$ \emph{contains} a numerical semigroup $S$ if the Ap\'ery tuple of $S$ lies in the relative interior of $F$.  Analogously, we say a face $F'
\subset P_m$ \emph{contains} $S$ if the Kunz tuple of $S$ lies in the relative interior of $F'$.

\begin{thm}[{\cite[Theorem~3.4]{kunzfaces1}}]\label{t:groupconefacelattice}
Fix a finite Abelian group $G$ and a face $F \subset \mathcal C(G)$.
\begin{enumerate}[(a)]
\item
The set
$H = \{h \in G : x_g = 0 \text{ for all } x \in F\}$
is a subgroup of $G$ (called the \emph{Kunz subgroup} of $F$), and the relation
$P = (G/H, \preceq)$, with unique minimal element $\ol 0$ and $\ol a \preceq_P
\ol b$ for distinct $a, b \in G \setminus H$ whenever $x_a + x_{b-a} = x_b$ for all $x \in F$, is a well-defined partial order (called the \emph{Kunz poset} of $F$).

\item
If $G = \ZZ_m$ with $m \ge 2$ and $F$ contains a numerical semigroup $S$, then
the Kunz subgroup of $F$ is trivial and the Kunz poset of $F$ equals the Kunz
poset of $S$.

\item
In the Kunz poset $P$ of $F$, $\ol b$ covers $\ol a$ if and only if $\ol b - \ol
a$ is an atom of $P$.

\end{enumerate}
\end{thm}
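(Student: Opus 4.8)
The plan is to run all three parts off the two basic relations that cut out $\mathcal C(G)$: the facet inequalities $x_i + x_j \ge x_{i+j}$ (indices read in $G$, with the convention $x_0 = 0$) and the ambient nonnegativity $x_g \ge 0$ coming from $\mathcal C(G) \subseteq \RR_{\ge 0}^{m-1}$. For part (a) I would first show $H$ is a subgroup: we have $0 \in H$ since $x_0 = 0$, and for $h, h' \in H$ the inequality $x_h + x_{h'} \ge x_{h+h'} \ge 0$ collapses, on $F$ where $x_h = x_{h'} = 0$, to $x_{h+h'} = 0$, so $H$ is closed under addition and hence, $G$ being finite, a subgroup. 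The engine for everything else is the observation that on $F$ the coordinate functions are constant on $H$-cosets: for $h \in H$ and any $g$, pairing $x_g + x_h \ge x_{g+h}$ with $x_{g+h} + x_{-h} \ge x_g$ (legitimate since $-h \in H$, so $x_h, x_{-h}$ vanish on $F$) squeezes $x_{g+h} = x_g$ throughout $F$. This makes $x_a + x_{b-a} = x_b$, read as an identity on $F$, independent of the chosen coset representatives, so $\preceq$ is well defined on $G/H$.

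Next I would check the order axioms. Reflexivity and the status of $\ol 0$ as the unique minimum are built into the definition. For antisymmetry, adding the defining identities of $\ol a \preceq \ol b$ and $\ol b \preceq \ol a$ yields $x_{b-a} + x_{a-b} = 0$; nonnegativity forces both summands to vanish on all of $F$, so $b - a \in H$ by the very definition of $H$, i.e.\ $\ol a = \ol b$. For transitivity, from $\ol a \preceq \ol b \preceq \ol c$ I get $x_a + x_{b-a} + x_{c-b} = x_c$, and then squeeze this between the facet inequalities $x_{b-a} + x_{c-b} \ge x_{c-a}$ and $x_a + x_{c-a} \ge x_c$ to force $x_a + x_{c-a} = x_c$, i.e.\ $\ol a \preceq \ol c$. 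The only bookkeeping is to confirm each index used lies in $G \setminus \{0\}$ so that the cited inequality is a genuine facet; this holds because the relevant cosets are nonzero and distinct.

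For part (b), the Ap\'ery tuple $(a_1, \ldots, a_{m-1})$ of $S$ lies in the relative interior of $F$ and is strictly positive (each $a_i \ge 1$), so no nonzero coordinate can vanish identically on $F$; hence $H = \{0\}$ and $G/H = \ZZ_m$. To match the posets I would invoke the standard fact that the facet inequalities active at a relative-interior point of $F$ are exactly those holding with equality on all of $F$. Thus $x_i + x_{j-i} = x_j$ on $F$ iff $a_i + a_{j-i} = a_j$ at the Ap\'ery point; and since $a_j - a_i \equiv j - i$ lies in $\Ap(S;m)$ exactly when it equals $a_{j-i}$, this is precisely the divisibility relation defining the Ap\'ery, hence Kunz, poset of $S$.

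Part (c) carries the real content, and I would extract a single lifting/projection lemma: if $\ol a \prec \ol b$ and $\ol c := \ol b - \ol a$, then for any $\ol d \notin \{\ol 0, \ol c\}$ one has $\ol d \prec \ol c$ iff $\ol a \prec \ol{a+d} \prec \ol b$. Its proof is another squeeze: using $x_b = x_a + x_c$ and $x_c = x_d + x_{c-d}$, chain $x_b \le x_{a+d} + x_{c-d} \le (x_a + x_d) + x_{c-d} = x_b$ to collapse both facet inequalities to equalities, yielding $\ol a \preceq \ol{a+d} \preceq \ol b$; the converse is the same computation in reverse. Granting the lemma, the forward direction of (c) is the contrapositive --- a $\ol d$ strictly between $\ol 0$ and $\ol c$ lifts to $\ol{a+d}$ strictly between $\ol a$ and $\ol b$, contradicting that $\ol b$ covers $\ol a$ --- and the reverse direction is the other contrapositive, projecting an intermediate $\ol e$ down to $\ol d = \ol e - \ol a$ strictly between $\ol 0$ and $\ol c$. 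The main obstacle, and the place to be most careful, is twofold: keeping every index in $G \setminus \{0\}$ so each invoked inequality is an honest facet, and noticing that the reverse implication genuinely uses $\ol a \prec \ol b$ --- an atom $\ol c$ paired with an arbitrary $\ol a$ need not satisfy $\ol a \prec \ol{a+c}$ --- so the biconditional in (c) is to be read among comparable pairs, as the word \emph{covers} already presupposes.
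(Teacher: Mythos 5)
This theorem is imported background: the paper gives no proof of it, citing instead Theorem~3.4 of \cite{kunzfaces1}, so there is no internal argument to compare yours against. Taken on its own terms, your proof is essentially correct and is the natural from-scratch argument: parts (a) and (b) are complete. The subgroup argument, the coset-constancy of the coordinate functions on $F$ (which is exactly what makes $\preceq$ well defined on $G/H$), antisymmetry via nonnegativity, transitivity via the squeeze, and in (b) the strict positivity of the Ap\'ery point together with the fact that a facet inequality is tight on all of $F$ precisely when it is tight at a relative-interior point, are all correct and are the right tools here.

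The one place where your own caveat points at a real (though small) hole is part (c). Your lifting lemma and its squeeze are the right mechanism, but the needed nondegeneracy is not dispatched by ``the relevant cosets are nonzero and distinct'': nothing in the hypotheses $\ol d \prec \ol c$, $\ol d \notin \{\ol 0, \ol c\}$, $\ol a \prec \ol b$ rules out $\ol d = -\ol a$, i.e.\ $\ol{a+d} = \ol 0$, and in that case neither $x_a + x_d \ge x_{a+d}$ nor $x_{a+d} + x_{c-d} \ge x_b$ is an honest facet inequality. The patch is short: if $\ol{a+d} = \ol 0$, then $\ol{c-d} = \ol b$, so coset-constancy turns the hypothesis $x_d + x_{c-d} = x_c$ into $x_d + x_b = x_c$; combining with $x_a + x_c = x_b$ gives $x_a + x_d = 0$ for all $x \in F$, hence $x_a = 0$ on all of $F$ and $\ol a = \ol 0$ by the definition of $H$, contradicting the (harmless) standing assumption $\ol a \ne \ol 0$, the case $\ol a = \ol 0$ being trivial. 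So under the lemma's hypotheses $\ol{a+d} \ne \ol 0$ holds automatically, every invoked inequality is a genuine facet, and your argument closes. (Equivalently, one can run your squeeze with inequalities that are merely valid on $F$, using the convention $x_0 = 0$ and nonnegativity, and the degenerate case eliminates itself.) Your closing remark that the reverse implication of (c) must be read with $\ol a \prec \ol b$ presupposed is correct and matches how the cited statement is intended.
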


Following Theorem~\ref{t:groupconefacelattice}(a), for the purposes of this manuscript, a poset $P$ is called a \emph{Kunz poset} if it equals the Kunz poset of a face of some group cone $\mathcal C(G)$.  

\begin{example}\label{e:groupconefacelattice}
The numerical semigroups $S = \<6,7,8,9\>$ and $S' = \<6,19,26,33\>$ both have Kunz poset depicted in Figure~\ref{f:findminpres1}(a).  Their Ap\'ery sets 
\[
\Ap(S; 6) = \{0,7,8,9,16,17\}
\qquad \text{and} \qquad
\Ap(S';6) = \{0,19,26,33,52,69\},
\]
when written $\{0, a_1, \ldots, a_5\}$ with $a_i \equiv i \bmod 6$, satisfy the equalities
\[
a_2 + a_3 = a_5.
\qquad
a_1 + a_3 = a_4,
\qquad \text{and} \qquad
2a_2 = a_4
\]
from Definition~\ref{d:kunzandgroupcone}.  These equalities indicate which 3 facets of $\mathcal C(\ZZ_6)$ contain $(a_1, \ldots, a_5)$, 
and the resulting relations in the Kunz poset are 
\[
2 \preceq 5, \,
3 \preceq 5,
\qquad
1 \preceq 4, \,
3 \preceq 4,
\qquad \text{and} \qquad
2 \preceq 4,
\]
respectively, along with $0 \preceq i$ for each $i \in \ZZ_6$.  

On the other hand, one can readily check that $(9,6,3,4,5,6,3) \in \mathcal C(\ZZ_8)$, lying in the face $F \subset \mathcal C(\ZZ_8)$ with facet equalities
\[
x_3 + x_7 = x_2,
\quad
2x_3 = x_6,
\quad
2x_7 = x_6,
\quad
x_3 + x_6 = x_1,
\quad
x_2 + x_7 = x_1,
\quad \text{and} \quad
x_4 + x_5 = x_1, 
\]
and whose corresponding Kunz poset is depicted in Figure~\ref{f:findminpres1}(b).  No numerical semigroup can lie in $F$ since the second and third equalities above force $a_3 = a_7$ to hold for all $(a_1, \ldots, a_7) \in F \cap \ZZ^7$, so $a_3 \equiv 3 \bmod 8$ and $a_7 \equiv 7 \bmod 8$ cannot both hold.  
\end{example}

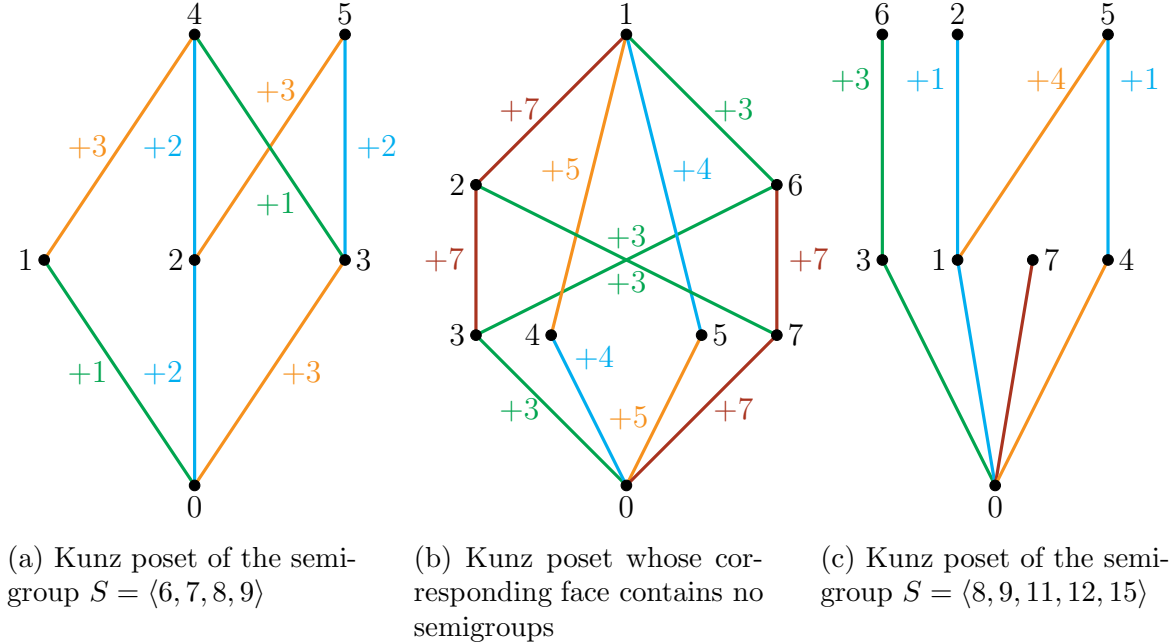
\begin{figure}[t]
\centering
\begin{subfigure}[b]{.3\linewidth}
\centering
\begin{tikzpicture}
\coordinate (0) at (2,0);
\coordinate (1) at (0,3);
\coordinate (2) at (2,3);
\coordinate (3) at (4,3);
\coordinate (4) at (2,6);
\coordinate (5) at (4,6);

\draw[very thick,Green] (0) -- (1) node[midway,left] {$+1$};
\draw[very thick,Cyan] (0) -- (2) node[midway,left] {$+2$};
\draw[very thick,BurntOrange] (0) -- (3) node[midway,right] {$+3$};

\draw[very thick,BurntOrange] (1) -- (4) node[midway,left] {$+3$};
\draw[very thick,Cyan] (2) -- (4) node[midway,left] {$+2$};
\draw[very thick,BurntOrange] (2) -- (5) node[near end,left] {$+3$};
\draw[very thick,Green] (3) -- (4) node[near start,left] {$+1$};
\draw[very thick,Cyan] (3) -- (5) node[midway,right] {$+2$};

\filldraw (0) circle (2pt) node[anchor=north] {$0$};

\filldraw (1) circle (2pt) node[anchor=east] {$1$};
\filldraw (2) circle (2pt) node[anchor=east] {$2$};
\filldraw (3) circle (2pt) node[anchor=west] {$3$};

\filldraw (4) circle (2pt) node[anchor=south] {$4$};
\filldraw (5) circle (2pt) node[anchor=south] {$5$};
\end{tikzpicture}
\caption{Kunz poset of the semigroup $S = \<6, 7, 8, 9\>$
\\
\text{ }}
\label{sf:detailedexample}
\end{subfigure}%
\hspace{0.04\linewidth}
\begin{subfigure}[b]{.3\linewidth}
\centering
\begin{tikzpicture}
\coordinate (0) at (2,0);
\coordinate (1) at (2,6);
\coordinate (2) at (0,4);
\coordinate (3) at (0,2);
\coordinate (4) at (1,2);
\coordinate (5) at (3,2);
\coordinate (6) at (4,4);
\coordinate (7) at (4,2);

\draw[very thick,Green] (0) -- (3) node[midway, left] {$+3$};
\draw[very thick,Cyan] (0) -- (4) node[pos=0.85, right] {$+4$};
\draw[very thick,BurntOrange] (0) -- (5) node[pos=0.45, left] {$+5$};
\draw[very thick,Mahogany] (0) -- (7) node[midway, right] {$+7$};

\draw[very thick,Mahogany] (3) -- (2) node[midway, left] {$+7$};
\draw[very thick,Green] (3) -- (6) node[midway, below] {$+3$};
\draw[very thick,BurntOrange] (4) -- (1) node[pos=0.55, left] {$+5$};
\draw[very thick,Cyan] (5) -- (1) node[pos=0.55, right] {$+4$};
\draw[very thick,Green] (7) -- (2) node[midway, above] {$+3$};
\draw[very thick,Mahogany] (7) -- (6) node[midway, right] {$+7$};

\draw[very thick,Green] (6) -- (1) node[midway, right] {$+3$};
\draw[very thick,Mahogany] (2) -- (1) node[midway, left] {$+7$};

\filldraw (0) circle (2pt) node[anchor=north] {$0$};

\filldraw (3) circle (2pt) node[anchor=east] {$3$};
\filldraw (2) circle (2pt) node[anchor=east] {$2$};
\filldraw (5) circle (2pt) node[anchor=west] {$5$};

\filldraw (1) circle (2pt) node[anchor=south] {$1$};
\filldraw (4) circle (2pt) node[anchor=east] {$4$};
\filldraw (7) circle (2pt) node[anchor=west] {$7$};

\filldraw (6) circle (2pt) node[anchor=west] {$6$};
\end{tikzpicture}
\caption{Kunz poset whose corresponding face contains no semigroups}
\label{sf:nosemigroupsexample}
\end{subfigure}%
\hspace{0.04\linewidth}
\begin{subfigure}[b]{.3\linewidth}
\centering
\begin{tikzpicture}
\coordinate (0) at (2.5,0);
\coordinate (1) at (2,3);
\coordinate (2) at (2,6);
\coordinate (3) at (1,3);
\coordinate (4) at (4,3);
\coordinate (5) at (4,6);
\coordinate (6) at (1,6);
\coordinate (7) at (3,3);

\draw[very thick, Green] (0) -- (3);
\draw[very thick, Cyan] (0) -- (1);
\draw[very thick, Mahogany] (0) -- (7);
\draw[very thick, BurntOrange] (0) -- (4);

\draw[very thick, Green] (3) -- (6) node[pos=0.8, left] {$+3$};
\draw[very thick, Cyan] (1) -- (2) node[pos=0.8, left] {$+1$};
\draw[very thick, BurntOrange] (1) -- (5) node[pos=0.8, left] {$+4$};
\draw[very thick, Cyan] (4) -- (5) node[pos=0.8, right] {$+1$};

\filldraw (0) circle (2pt) node[anchor=north] {$0$};

\filldraw (1) circle (2pt) node[anchor=east] {$1$};
\filldraw (3) circle (2pt) node[anchor=east] {$3$};
\filldraw (4) circle (2pt) node[anchor=west] {$4$};
\filldraw (7) circle (2pt) node[anchor=west] {$7$};

\filldraw (2) circle (2pt) node[anchor=south] {$2$};
\filldraw (6) circle (2pt) node[anchor=south] {$6$};
\filldraw (5) circle (2pt) node[anchor=south] {$5$};
\end{tikzpicture}
\caption{Kunz poset of the semigroup $S = \<8, 9, 11, 12, 15\>$
\\
\text{ }}
\label{sf:bettidimension}
\end{subfigure}%
\caption{Hasse diagrams of posets from Examples~\ref{e:groupconefacelattice},~\ref{e:minpres}, and~\ref{e:facedimcomparison}.}
\label{f:findminpres1}
\end{figure}

\section{Minimal presentations of Kunz posets}
\label{sec:kunzminpres}

In this section, we demonstrate that Kunz posets inherit a natural additive structure from the face of $\mathcal C(G)$ they correspond to (Theorem~\ref{t:kunznilsemigroup}), and use this to develop the notion of minimal presentation of a Kunz poset.  

\begin{defn}\label{d:nilsemigroup}
Fix a semigroup $(N, +)$.  A \emph{nil} is an element $\infty \in N$ such that $a + \infty = \infty$ for all $a \in N$.  We say an element $a \in N$ is \emph{nilpotent} if $n a = \infty$ for some $n \in \ZZ_{\ge 1}$, and \emph{partly cancellative} if $a + b = a + c \ne \infty$ implies $b = c$ for all $b, c \in N$.  We~say $N$ is a \emph{nilsemigroup with identity} (or just a \emph{nilsemigroup}) if $N$ has an identity element and every non-identity element is nilpotent, and that $N$ is \emph{partly cancellative} if every non-nil element of $N$ is partly cancellative.  
\end{defn}

\begin{remark}\label{r:nilsemigroup}
Definition~\ref{d:nilsemigroup} is somewhat nonstandard; the usual definition of nilsemigroup requires every element to be nilpotent, preventing the existence of an identity element, while a \emph{nilmonoid} has both a nil and an identity.  For consistency with the other papers in this series, we prefer the term nilsemigroup.  
\end{remark}

\begin{thm}\label{t:kunznilsemigroup}
Fix a face $F \subset \mathcal C(G)$ with Kunz subgroup $H$.  Define a commutative operation $\oplus$ on the set $N = (G/H) \cup \{\infty\}$ so that $\infty$ is nil and for all $a, b \in G$,
\[
\ol a \oplus \ol b = \begin{cases}
\ol a + \ol b & \text{if $x_a + x_b = x_{a+b}$ for all $x \in F$;} \\
\infty & \text{otherwise.}
\end{cases}
\]
Under this operation, $(N, \oplus)$ is a partly cancellative nilsemigroup (which we call the \emph{Kunz nilsemigroup} of $F$).  Moreover, the divisibility poset of $N \setminus \{\infty\}$ equals the Kunz poset $P = (G/H, \preceq)$ of $F$.  
\end{thm}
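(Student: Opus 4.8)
The plan is to verify the nilsemigroup axioms in turn, deriving everything from two structural facts about $F$: that each coordinate function $x \mapsto x_a$ is constant on cosets of $H$, and that the subadditivity $x_a + x_b \ge x_{a+b}$ holds for all $a,b \in G$ and all $x \in \mathcal C(G)$ (on the nose via the facet inequalities when $a, b, a+b \ne 0$, and trivially otherwise using $x_0 = 0$ and $x \ge 0$).

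First I would record that $x_a = x_{a+h}$ for every $h \in H$, $a \in G$, and $x \in F$: applying the facet inequality to the pairs $(h,a)$ and $(-h, a+h)$ and using $x_h = x_{-h} = 0$ on $F$ pinches the two resulting inequalities into an equality. Consequently the coordinates descend to $G/H$, and the defining condition $x_a + x_b = x_{a+b}$ is insensitive to the choice of coset representatives, so $\oplus$ is well defined. Commutativity is immediate, and $\ol 0$ is an identity since $x_0 + x_b = x_b = x_{0+b}$ always holds.

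The crux is associativity, where I expect the only real work. I would show that $(\ol a \oplus \ol b) \oplus \ol c$ and $\ol a \oplus (\ol b \oplus \ol c)$ are finite precisely when the single triple equality $x_a + x_b + x_c = x_{a+b+c}$ holds for all $x \in F$, in which case both equal $\ol{a+b+c}$. The mechanism is the telescoping chain $x_a + x_b + x_c \ge x_{a+b} + x_c \ge x_{a+b+c}$: equality of the outer terms forces equality at each link, hence $x_a + x_b = x_{a+b}$ and $x_{a+b} + x_c = x_{a+b+c}$, and the symmetric chain handles the other bracketing. Thus finiteness is an all-or-nothing phenomenon governed by one equality, and the two bracketings agree. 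Nilpotency follows by iteration: were every power $n\ol a$ finite, an easy induction gives $x_{na} = n x_a$ for all $x \in F$, but taking $n$ to be the order of $a$ in $G$ yields $n x_a = x_{na} = x_0 = 0$ for every $x \in F$, forcing $a \in H$, i.e.\ $\ol a = \ol 0$. Partial cancellativity is then free: if $\ol a \oplus \ol b = \ol a \oplus \ol c \ne \infty$, both sides are honest sums in the group $G/H$, so cancellation there gives $\ol b = \ol c$.

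Finally, for the poset statement I would unwind divisibility directly. For $\ol a, \ol b \in G/H$, the element $\ol a$ divides $\ol b$ in $N$ iff some $\ol c$ satisfies $\ol a \oplus \ol c = \ol b \ne \infty$; since necessarily $\ol c = \ol{b-a}$ and coordinates are constant on cosets, this says exactly $x_a + x_{b-a} = x_b$ for all $x \in F$, which is the defining relation of the Kunz poset $P$ in Theorem~\ref{t:groupconefacelattice}(a). The edge cases match: $\ol 0$ divides everything (take $\ol c = \ol b$) while only $\ol 0$ divides $\ol 0$, consistent with $\ol 0$ being the unique minimum. As both relations are already known to be partial orders and coincide as relations, the divisibility poset of $N \setminus \{\infty\}$ equals $P$. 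Beyond this bookkeeping I anticipate no difficulty; the associativity/all-or-nothing step is the heart of the matter.
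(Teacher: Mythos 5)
Your proof is correct and takes essentially the same route as the paper's: the heart in both is the sandwich of subadditivity inequalities $x_a+x_b+x_c \ge x_{a+b}+x_c \ge x_{a+b+c}$ forcing equality at each link (so finiteness of one bracketing implies finiteness of the other), with partial cancellativity inherited from cancellation in $G/H$ and divisibility unwinding to the defining relation $x_a + x_{b-a} = x_b$ of the Kunz poset. The only differences are that you prove coset-invariance of the coordinates (well-definedness) directly where the paper cites Theorem~\ref{t:groupconefacelattice}(a) and a result of \cite{kunzfaces1} to reduce to $H=\{0\}$, and you spell out nilpotency of non-identity elements via the order of $a$ in $G$, a point the paper leaves implicit.
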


\begin{proof}
By Theorem~\ref{t:groupconefacelattice}(a), $\oplus$ is well-defined, and by \cite[Corollary~3.7]{kunzfaces1}, we may assume $H = \{0\}$ and $G/H \cong G$.  By definition, $0$ and $\infty$ are the identity and nil, respectively.  To check associativity of $\oplus$, by the associativity of $G$ it suffices to argue $(a \oplus b) \oplus c = \infty$ if and only if $a \oplus (b \oplus c) = \infty$.  If $(a \oplus b) \oplus c \ne \infty$, then $(a \oplus b) \oplus c = a + b + c$, so
\[
x_a + x_b + x_c = x_{a+b} + x_c = x_{a+b+c} \ge x_a + x_{b+c} \ge x_a + x_b + x_c
\]
for all $x \in F$.  This means $b \oplus c = b + c$, and thus $a \oplus (b \oplus c) = a + b + c$ as well.  As the converse direction follows symmetrically, this proves $N$ is a nilsemigroup.  There are two remaining claims to verify:\ (i) partial cancellativity follows since $a \oplus b = a \oplus c \ne \infty$ implies $a + b = a + c$ and thus $b = c$; and (ii) $P$ is the divisibility poset of $N \setminus \{\infty\}$ since  for $a, b \in N \setminus \{\infty\}$, both $a \preceq_P b$ and $a \oplus (b - a) = b$ are equivalent to the statement $x_a + x_{b-a} = x_b$ for all $x \in F$.  This completes the proof.  
\end{proof}

As a consequence of Theorem~\ref{t:kunznilsemigroup}, Kunz posets can be seen to inheret the additive and factorization structure of their corresponding Kunz nilsemigroup.  Allusions to this idea could be seen in Section~3 of \cite{kunzfaces1} (e.g., in that cover relations of $P$ are naturally labeled by atoms of $P$), but Theorem~\ref{t:kunznilsemigroup} ensures that a full additive nilsemigroup structure can indeed be recovered from the corresponding face $F$.  As~such, we often invoke the underlying nilsemigroup structure of $N = P \cup \{\infty\}$ when working with elements of a Kunz poset $P$.  We now make this formal.  

In the remainder of this section, unless stated otherwise, assume $P$ is a Kunz poset on $\ZZ_m$
with atom set $\mathcal A(P) = \{p_1, \ldots, p_k\}$, and $N$ is the corresponding Kunz nilsemigroup.  

\begin{defn}\label{d:posetfactorizations}
We define $\mathsf Z_P(p) = \mathsf Z_N(p)$ for $p \in P$, and
$$\mathsf Z_P(\infty) = \mathsf Z_N(\infty) = \NN^k \setminus \bigcup_{p \in P} \mathsf Z_P(p).$$
Moreover,  the \emph{factorization homomorphism} of $P$ is the map
$$\varphi_P:\NN^k \to P \cup \{\infty\},$$
sending each $k$-tuple~$z$ to the element $p \in P \cup \{\infty\}$ with $z \in \mathsf Z_P(p)$.  Note that this coincides with the factorization homomorphism of $N$, that is, 
$$\varphi_P(z) = \varphi_N(z) = z_1p_1 + \cdots + z_kp_k,$$
for $z \notin \mathsf Z_P(\infty)$, 
where $P$ and $N$ both have atom set $\mathcal A(P) = \{p_1, \ldots, p_k\}$.  
\end{defn}


\begin{remark}\label{r:chains}
Factorizations of a Kunz poset element $p \in P$ can be viewed in the Hasse diagram in terms of chains from $0$ to $p$.  
See Figure~\ref{f:rearrangechains} for a depiction.  Notice that different chains can correspond to the same factorization, since factorizations are inherently unordered sums of atoms.  
\end{remark}

\begin{figure}[t]
\centering
\begin{subfigure}[b]{.3\linewidth}
\centering
\begin{tikzpicture}
\draw[ultra thick, red] (1,0) -- (0,1);
\draw[thick] (1,0) -- (2,1);

\draw[ultra thick, red] (0,1) -- (1,2);
\draw[thick] (2,1) -- (1,2);
\draw[thick] (2,1) -- (3,2);

\draw[ultra thick, red] (1,2) -- (2,3);
\draw[thick] (3,2) -- (2,3);

\filldraw (1,0) circle (2pt) node[anchor=north] {$0$};

\filldraw (0,1) circle (2pt) node[anchor=north] {$3$};
\filldraw (2,1) circle (2pt) node[anchor=north] {$2$};

\filldraw (1,2) circle (2pt) node[anchor=north] {$5$};
\filldraw (3,2) circle (2pt) node[anchor=north] {$4$};

\filldraw (2,3) circle (2pt) node[anchor=north] {$1$};
\end{tikzpicture}
\end{subfigure}%
\begin{subfigure}[b]{.3\linewidth}
\centering
\begin{tikzpicture}
\draw[thick] (1,0) -- (0,1);
\draw[ultra thick, red] (1,0) -- (2,1);

\draw[thick] (0,1) -- (1,2);
\draw[ultra thick, red] (2,1) -- (1,2);
\draw[thick] (2,1) -- (3,2);

\draw[ultra thick, red] (1,2) -- (2,3);
\draw[thick] (3,2) -- (2,3);

\filldraw (1,0) circle (2pt) node[anchor=north] {$0$};

\filldraw (0,1) circle (2pt) node[anchor=north] {$3$};
\filldraw (2,1) circle (2pt) node[anchor=north] {$2$};

\filldraw (1,2) circle (2pt) node[anchor=north] {$5$};
\filldraw (3,2) circle (2pt) node[anchor=north] {$4$};

\filldraw (2,3) circle (2pt) node[anchor=north] {$1$};
\end{tikzpicture}
\end{subfigure}%
\begin{subfigure}[b]{.3\linewidth}
\centering
\begin{tikzpicture}
\draw[thick] (1,0) -- (0,1);
\draw[ultra thick, red] (1,0) -- (2,1);

\draw[thick] (0,1) -- (1,2);
\draw[thick] (2,1) -- (1,2);
\draw[ultra thick, red] (2,1) -- (3,2);

\draw[thick] (1,2) -- (2,3);
\draw[ultra thick, red] (3,2) -- (2,3);

\filldraw (1,0) circle (2pt) node[anchor=north] {$0$};

\filldraw (0,1) circle (2pt) node[anchor=north] {$3$};
\filldraw (2,1) circle (2pt) node[anchor=north] {$2$};

\filldraw (1,2) circle (2pt) node[anchor=north] {$5$};
\filldraw (3,2) circle (2pt) node[anchor=north] {$4$};

\filldraw (2,3) circle (2pt) node[anchor=north] {$1$};
\end{tikzpicture}
\end{subfigure}
\caption{Chains corresponding to the factorization $(1,2) \in \mathsf Z_P(1)$, where $P$ is the Kunz poset for the numerical semigroup $S = \<6, 9, 20\>$.}
\label{f:rearrangechains}
\end{figure}
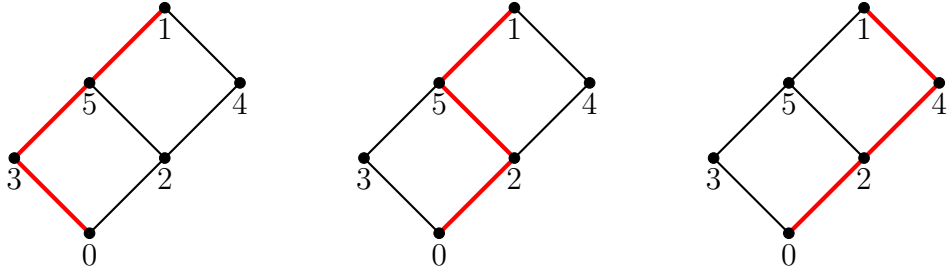

\begin{defn}\label{d:posetminimalpresentation}
Under the kernel congruence $\til = \ker\varphi_P$ of $P$, each relation $z \sim z'$ is called a \emph{trade}.  
A \emph{presentation of $P$} is a set of trades obtained from a generating set for $\til$ by omitting any generators $z \sim z'$ with $\varphi_P(z) = \infty$.  We say a presentation for $P$ is \emph{minimal} if it can be obtained in this way from a generating set for $\til$ that is minimal with respect to containment.  
\end{defn}

\begin{example}\label{e:minpres}
Let $S = \<6,7,8,9\>$, whose Kunz poset is depicted in Figure~\ref{f:findminpres1}(a).  Under Definition~\ref{d:posetfactorizations}, we obtain the factorization sets
\[
\begin{aligned}
\mathsf Z_P(0) &= \{(0,0,0)\}, &
\mathsf Z_P(1) &= \{(1,0,0)\}, &
\mathsf Z_P(2) &= \{(0,1,0)\}, \\
\mathsf Z_P(3) &= \{(0,0,1)\}, &
\mathsf Z_P(4) &= \{(1,0,1), (0,2,0)\}, &
\mathsf Z_P(5) &= \{(0,1,1)\}.
\end{aligned}
\]
and in the corresponding Kunz nilsemigroup $N$, we see $2 \oplus 3 = 5$ since 
\[
(0,1,0) + (0,0,1) = (0,1,1) \in \mathsf Z_P(5),
\]
while $1 \oplus 2 = \infty$ since $(1,0,0) + (0,1,0) = (1,1,0) \in \mathsf Z_P(\infty)$.  
As such, we see the only minimal presentation for $P$ is $\rho = \{((1,0,1),(0,2,0))\}$, since \texttt{Macaulay2}~\cite{M2} can be used (see below) to compute
\[
\ker \varphi_P = \<((1,0,1),(0,2,0)), ((2,0,0),(0,0,2)), ((2,0,0),(1,1,0)), ((2,0,0),(1,1,1))\>,
\]
and only the first relation involves factorizations outside of $\mathsf Z_P(\infty)$.  For instance, one can use the following \texttt{Macaulay2} code, which obtains a minimal generating set for $\varphi_P$ by computing a free resolution over $R = \QQ[x_0, x_1, \ldots, x_k]$ of the ideal 
$$J = I_S + \<x_0x_i - x_0 : i = 0,1,\ldots,k\>$$
and extracting $1^\textnormal{st}$ syzygies, where $I_S$ is the kernel of the map $R \to \QQ[t]$ given by $x_i \mapsto t^{n_i}$ (see~\cite{nsbettisurvey} for background on free resolutions in this context).  A minimal generating set for $I_S$ has one binomial for each trade in a minimal presentation of $S$, and the additional generators of $J$ ensure that any two monomials involving $x_0$ are equal modulo $J$.  

\smallskip
\begin{verbatim}
Q = QQ[x0,x1,x2,x3];  T = QQ[t];
phiS = map(T, Q, matrix {{t^6, t^7, t^8, t^9}});
J = ker(phiS) + ideal(x0^2 - x0, x0*x1 - x0, x0*x2 - x0, x0*x3 - x0);
R = QQ[y1,y2,y3];  U = Q/J;
phiN = map(U, R, matrix {{x1, x2, x3}});
FR = resolution(ker phiN);  FR.dd_1
\end{verbatim}
\smallskip

Next, let $P$ denote the Kunz poset depicted in Figure~\ref{f:findminpres1}(b), and $N$ its Kunz nilsemigroup.  
Only 2 elements of $P$ are not uniquely factorable, and have factorization sets
\[
\mathsf Z_P(6) = \{(2,0,0,0),(0,0,0,2)\}
\qquad \text{and} \qquad
\mathsf Z_P(1) = \{(3,0,0,0),(1,0,0,2),(0,1,1,0)\},
\]
respectively.  
The kernel of $\varphi_P: \NN^4 \to P \cup \{\infty\}$ has 9~generators, but all but 2 contain factorizations of $\mathsf Z_P(\infty)$, occuring at $1 \in P$ and $6 \in P$, respectively.  This yields 
\[
\begin{aligned}
&\{((2,0,0,0),(0,0,0,2)), ((1,0,0,2),(0,1,1,0))\}
\quad \text{and} \\
&\{((2,0,0,0),(0,0,0,2)), ((3,0,0,0),(0,1,1,0))\}
\end{aligned}
\]
as the possible minimal presentations of $P$.  
\end{example}

Due to the inductive nature of the proof of Theorem~\ref{t:allminimalpresentations} given in \cite[Lemma~2.1]{deltabetti}, the cancellativity hypothesis is only used when examining an element's factorization graph and divisors.  As such, one may apply an identical argument to non-nil elements of a semigroup $N$ that is only partly cancellative, resulting in an analogous graph-theoretic characterization of the relations in a minimal presentation of $N$ that occur at non-nil elements.  The following thus holds with an identical proof to that of \cite[Lemma~2.1]{deltabetti}.  

\begin{prop}\label{p:kunzminpresnabla}
Fix a Kunz poset $P$.  A set $\rho$  of relations between factorizations of elements of $P$ is a presentation of $P$ if and only if for every $p \in P$, a connected graph is obtained from $\nabla_p$ by adding an edge for each pair of factorizations of $p$ in~$\rho$.  Furthermore, a presentation $\rho$ of $P$ is minimal if and only if for every $p \in P$, the number of connected components in $\nabla_p$ is one more than the number of relations in $\rho$ containing factorizations of $p$.  
\end{prop}

As is likely not surprising, factorizations of Ap\'ery set elements in a numerical semigroup $S$ coincide with factorizations of elements of the corresponding Kunz poset $P$, as the following theorem indicates.

\begin{thm}\label{t:posettosemigroupminpres}
Fix a numerical semigroup $S$ with Kunz poset $P$.  Writing the Ap\'ery set of $S$ as $\Ap(S;m) = \{0, a_1, \ldots, a_{m-1}\}$ with $a_i \equiv i \bmod m$ for each $i$, we have
$$\mathsf Z_S(a_i) = \{(0, z_1, \ldots, z_k) : z \in \mathsf Z_P(i)\}$$
for each $i$.  Moreover, given any minimal presentation $\rho$ of $S$, the set 
$$\rho' = \{((z_1, \ldots, z_k), (z_1', \ldots, z_k')) : (z, z') \in \rho \text{ with } \varphi_S(z) \in \Ap(S; m)\}$$
is a minimal presentation for $P$.  
\end{thm}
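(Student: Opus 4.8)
The plan is to first establish the factorization identity $\mathsf Z_S(a_i) = \{(0,z) : z \in \mathsf Z_P(i)\}$, and then to leverage it, together with Theorem~\ref{t:allminimalpresentations}, to transport a minimal presentation of $S$ to one of $P$. Throughout I identify the atoms $p_1, \ldots, p_k$ of $P$ with the minimal generators $n_1, \ldots, n_k$ of $S$ other than $m$ via $p_j = \ol{n_j}$, so that deleting the zeroth coordinate realizes the projection $\NN^{k+1} \to \NN^k$ underlying the statement.

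For the factorization identity, I would first observe that no factorization of an Ap\'ery element can use the generator $m$: if $a_i = z_0 m + z_1 n_1 + \cdots + z_k n_k$ with $z_0 > 0$, then $a_i - m \in S$, contradicting $a_i \in \Ap(S;m)$; hence every factorization of $a_i$ has zeroth coordinate $0$. The core step is then a lemma, proved by induction on $|z| = z_1 + \cdots + z_k$: writing $w(z) = z_1 n_1 + \cdots + z_k n_k \in S$ and $r_j = n_j \bmod m$, I claim $\varphi_P(z) = \ol i \ne \infty$ if and only if $w(z) = a_i$, while $\varphi_P(z) = \infty$ if and only if $w(z) \in S \setminus \Ap(S;m)$. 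In the inductive step one splits off a single atom $p_j$ and invokes Theorem~\ref{t:kunznilsemigroup}: by the face relations of Theorem~\ref{t:groupconefacelattice}, $\ol{i'} \oplus \ol{r_j}$ is finite exactly when $a_{i'} + n_j = a_{i'+r_j}$ (the Ap\'ery form of the facet equality $x_{i'} + x_{r_j} = x_{i'+r_j}$), whereas a strict inequality, or a value already equal to $\infty$, forces $w(z) - m \in S$, i.e. $w(z) \notin \Ap(S;m)$. Combining this lemma with the vanishing of the zeroth coordinate yields both inclusions of the identity for $\mathsf Z_S(a_i)$.

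For the presentation statement, I would compare factorization graphs through Theorem~\ref{t:allminimalpresentations}. Since each factorization of $a_i$ has zeroth coordinate $0$, the generator $m$ never contributes an edge in $\nabla_{a_i}$, so deleting the zeroth coordinate gives a graph isomorphism $\nabla_{a_i} \cong \nabla_i$ between the semigroup factorization graph of $a_i$ and the poset factorization graph of $i$. In particular the two graphs have the same connected components, and $a_i$ is a Betti element of $S$ exactly when $\nabla_i$ is disconnected. Now apply Theorem~\ref{t:allminimalpresentations} to the given minimal presentation $\rho$ of $S$: for each $a_i \in \Ap(S;m)$, the relations of $\rho$ whose $\varphi_S$-image is $a_i$ number one fewer than the components of $\nabla_{a_i}$ and, upon adding the corresponding edges, connect it. Stripping zeroth coordinates, these are precisely the relations of $\rho' = \{((z_1,\ldots,z_k),(z_1',\ldots,z_k')) : (z,z') \in \rho,\ \varphi_S(z) \in \Ap(S;m)\}$ lying over $i \in P$, and they satisfy the same two conditions for $\nabla_i$.

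Finally I would interpret these conditions through Definition~\ref{d:posetminimalpresentation}. Applying Theorem~\ref{t:allminimalpresentations} to the finitely generated commutative semigroup $N$ (legitimate, since the criterion is purely combinatorial and unaffected by the presence of the nil), a minimal generating set of $\ker\varphi_P = \ker\varphi_N$ contributes, at each Betti element $q \in N$, exactly one fewer relation than the number of components of $\nabla_q$, forming a spanning tree; moreover the relations partition by their Betti element, so those over $\infty$ may be chosen independently of those over finite elements. Hence a set of finite-element relations is a minimal presentation of $P$ precisely when, for every finite $q$, it supplies a spanning tree's worth of relations connecting $\nabla_q$, which is exactly what $\rho'$ does. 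This certifies that $\rho'$ is a minimal presentation of $P$. The main obstacle I anticipate is this last bookkeeping: making rigorous that the finite-element relations of a minimal generating set of $\ker\varphi_P$ are characterized independently of the $\infty$-relations, so that matching the per-Betti-element counts of $\rho'$ genuinely establishes minimality rather than mere spanning.
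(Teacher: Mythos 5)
Your first half is sound. The induction on factorization length, using the ideal property of $S \setminus \Ap(S;m)$ together with the fact that $\ol{i'} \oplus \ol{r_j} \ne \infty$ exactly when $a_{i'} + n_j = a_{i'+r_j}$ (valid because $S$ lies in the relative interior of its face), correctly yields $\mathsf Z_S(a_i) = \{(0,z) : z \in \mathsf Z_P(i)\}$; the paper reaches the same conclusion more directly by observing that the Rees quotient map $f \colon S \to N$ sending $n \mapsto \ol n$ on $\Ap(S;m)$ and $n \mapsto \infty$ otherwise is a semigroup homomorphism restricting to a bijection $\Ap(S;m) \to P$. Likewise, your transport of the connected-component conditions from $\nabla_{a_i}$ to $\nabla_i$ is fine.

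The gap is in your final step: Theorem~\ref{t:allminimalpresentations} does \emph{not} apply to $N$, and the assertion that the criterion is ``unaffected by the presence of the nil'' is false. That criterion rests on the reduction lemma: if two factorizations of an element share a support index $i$, then subtracting $e_i$ gives two factorizations of one common smaller element, so the pair is generated by relations at smaller elements. In $N$ this fails exactly at $\infty$, because partial cancellativity (Theorem~\ref{t:kunznilsemigroup}) says nothing when the sum is nil: in the paper's Example~\ref{e:minpres} (for $S = \langle 6,7,8,9\rangle$), the factorizations $(2,0,0)$ and $(1,1,0)$ both lie in $\mathsf Z_P(\infty)$ and share support, yet $(1,0,0) \in \mathsf Z_P(1)$ while $(0,1,0) \in \mathsf Z_P(2)$, so this pair cannot be reduced and must essentially appear among the generators of $\ker\varphi_P$. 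Worse, $\nabla_\infty$ is \emph{always} connected (for any $z,z' \in \mathsf Z_N(\infty)$, the factorization $z+z'$ is again in $\mathsf Z_N(\infty)$ and adjacent to both), so your criterion would predict that a minimal generating set of $\ker\varphi_P$ contains zero relations at $\infty$ --- contradicting the Macaulay2 computation in Example~\ref{e:minpres}, where three of the four generators lie at $\infty$. Consequently your per-Betti-element counting cannot certify minimality of $\rho'$, and the independence of the finite-element relations from the $\infty$-relations --- which you correctly flag as the crux --- is precisely what remains unproven. To repair it one can note that partial cancellativity does validate the reduction lemma at every $q \ne \infty$, so any generating set must connect the components of $\nabla_q$ there, and then run an exchange argument (replace the finite part of an arbitrary minimal generating set of $\ker\varphi_P$ by $\rho'$ and check that generation and minimality survive); alternatively, follow the paper, which gets this from $\varphi_N = f \circ \varphi_S$: then $\ker\varphi_S \subset \ker\varphi_N$, and injectivity of $f$ on $\Ap(S;m)$ forces every relation in $\ker\varphi_N \setminus \ker\varphi_S$ to join two factorizations of $\infty$, so the Ap\'ery-element relations of $\rho$ can be completed to a minimal generating set of $\ker\varphi_P$ using $\infty$-relations alone.
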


\begin{proof}
Let $N = P \cup \{\infty\}$ denote the Kunz nilsemigroup corresponding to $P$.  The map $f:S \to N$ given by 
$$
n \mapsto \begin{cases}
\ol n & \text{if } n \in \Ap(S;m); \\
\infty & \text{otherwise},
\end{cases}$$
where $\ol n \in \ZZ_m$ denotes the equivalence class of $n$ modulo $m$, is a semigroup homomorphism that restricts to a bijection $\Ap(S; m) \to P$ and has $f^{-1}(\infty) = S \setminus \Ap(S; m)$.  This implies the first claim.  Moreover, $\ker \varphi_S \subseteq \ker(f \circ \varphi_S)$, and since $f$ is injecive on $\Ap(S; m)$, any relations in $\ker(f \circ \varphi_S) \setminus \ker \varphi_S$ occur between factorizations of elements of $S \setminus \Ap(S; m)$.  
As such, fixing a minimal presentation $\rho$ for $S$ and letting $\rho'$ be defined as above, we see two factorizations $z, z' \in \mathsf Z_S(n)$ with $n \in \Ap(S;m)$ are connected by a sequence of trades in $\rho$ if and only if the factorizations $(z_1, \ldots, z_k)$ and $(z_1', \ldots, z_k')$ of $f(n) \in P$ are connected by a sequence of trades in $\rho'$.  
This completes the proof.  
\end{proof}

\begin{example}\label{e:posettosemigroupminpres}
Returning briefly to $S = \<6,7,8,9\>$ as in Examples~\ref{e:groupconefacelattice} and~\ref{e:minpres}, 
a~full minimal presentation of $S$, obtained with~\cite{numericalsgpsgap}, is comprised of the relations
\[
\begin{array}{l@{\qquad}l}
((0, 1, 0, 1), (0, 0, 2, 0)), 
&
((1, 0, 0, 1), (0, 1, 1, 0)), 
\\
((1, 0, 1, 0), (0, 2, 0, 0)), 
&
((3, 0, 0, 0), (0, 0, 0, 2)).
\end{array}
\]
From this, we obtain the minimal presentation of $P$ in Example~\ref{e:minpres} by first eliminating all relations except the first (as they occur at the Betti elements $14, 15, 18 \notin \Ap(S;6)$), and then omitting the preceding $0$ from both remaining factorizations.  
\end{example}

\begin{remark}\label{r:reesequotients}
The map $f$ in the proof of Theorem~\ref{t:posettosemigroupminpres} identifies the Kunz nilsemigroup of a numerical semigroup $S$ as a \emph{Rees quotient}, wherein the elements of a semigroup ideal $I \subset S$ (in this case, $I = S \setminus \Ap(S;m)$) are identified into a single (nil) element.  Rees quotients of numerical semigroups were investigated in~\cite{reesquotientsns}, although the primary focus was on ideals of the form $I = S \cap \ZZ_{\ge t}$ for some $t \in \ZZ_{\ge 1}$, a form which the complement of $\Ap(S;m)$ doesn't fit unless $m = \mathsf m(S)$.  
\end{remark}

\section{A combinatorial formula for face dimension}
\label{sec:facedim}

In this section, we present a combinatorial method of obtaining the dimension of a face of the Kunz polyhedron from its corresponding poset.

\begin{example}\label{e:facedimcomparison}
Consider the Kunz posets $P_1$ and $P_2$, depicted in Figures~\ref{sf:nosemigroupsexample} and~\ref{sf:bettidimension}, respectively.
Although $P_1$ and $P_2$ have identical cardinality and number of atoms, the dimensions of their corresponding faces $F_1$ and $F_2$ in $P_8$ are of different dimension.  In particular, $\dim F_1 = 2$, while $\dim F_2 = 4$.  As we will see in Theorem~\ref{t:posetdimension}, this is closely related to the fact that $P_1$ has 2 relations in its minimal presentation (as seen in Example~\ref{e:minpres}), while $P_2$ has an empty minimal presentation.  
%
%
%
%
\end{example}

\begin{defn}\label{d:hmatrix}
Fix a face $F \subset P_m$ with corresponding Kunz poset $P$ on $\ZZ_m$.  Suppose that $P$ has $k$ atoms and that $F$ is contained in $i$ facets.  
\begin{enumerate}[(a)]
\item 
The \emph{hyperplane matrix} of $F$ is the matrix $H_F \in \ZZ^{i\times (m-1)}$ whose columns are indexed by the nonzero elements of $\ZZ_m$ and whose rows are given by the equations of the facets containing~$F$.  

\item
Given any finite presentation $\rho$ of $P$, the matrix $M_\rho \in \ZZ^{|\rho| \times k}$ whose columns are indexed by the atoms of $P$ and whose rows have the form $z - z'$ for $(z, z') \in \rho$ is called a \emph{presentation matrix} of $P$.  

\end{enumerate}
\end{defn}

\begin{thm}\label{t:posetdimension}
If $F \subset P_m$ is a face with corresponding Kunz poset $P$ on $\ZZ_m$, then
\[
\dim F = k - \rk(M_\rho)
\]
where $k$ is the number of atoms of $P$ and $\rho$ is any finite presentation of $P$.  
\end{thm}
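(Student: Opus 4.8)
The plan is to realize both $\dim F$ and $k - \rk(M_\rho)$ as the dimension of isomorphic copies of a single linear space, namely the translation space of the affine hull of $F$. By \cite[Corollary~3.7]{kunzfaces1} I may pass to the group cone and assume the Kunz subgroup $H$ is trivial, so that $F$ is a translate of a cone that is full dimensional inside its linear span, and $P_m$ is full dimensional in $\RR^{m-1}$. Standard polyhedral geometry then gives
$\dim F = \dim(\ker H_F) = (m-1) - \rk(H_F)$,
where I regard $H_F$ as the matrix of the linear parts of the facet equations cutting out $F$ (Definition~\ref{d:hmatrix}) and $\ker H_F \subseteq \RR^{m-1}$ is the solution space of the homogeneous system $H_F x = 0$. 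It therefore suffices to produce a linear isomorphism between $\ker H_F$ and $\ker M_\rho$, since the latter has dimension $k - \rk(M_\rho)$.

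The map I would use is the coordinate projection $\pi \colon \RR^{m-1} \to \RR^k$ onto the coordinates indexed by the atoms $\mathcal A(P) = \{p_1, \ldots, p_k\}$. The central computational fact is a telescoping identity: if $x$ satisfies every facet equation of $F$ and $z \in \mathsf Z_P(p)$, then $x_p = z_1 x_{p_1} + \cdots + z_k x_{p_k} = z \cdot \pi(x)$. This holds because a factorization of $p$ builds $p$ from $0$ by adjoining atoms one at a time and, since $\infty$ is absorbing in the Kunz nilsemigroup, every partial sum stays finite; each single-atom step $\ol q \oplus \ol{p_i} = \ol{q + p_i} \neq \infty$ is, by the definition of $\oplus$ in Theorem~\ref{t:kunznilsemigroup}, exactly a facet equation $x_q + x_{p_i} = x_{q + p_i}$ holding on $F$, hence on $\ker H_F$. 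Applying this to both members of a relation $(z, z') \in \rho$ shows $(z - z') \cdot \pi(x) = 0$, so $\pi(\ker H_F) \subseteq \ker M_\rho$; applying it with $\pi(x) = 0$ forces $x_p = 0$ for every $p$ (each poset element has a factorization into atoms), giving injectivity.

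Surjectivity is where the work concentrates. Given $t \in \ker M_\rho$, I would define $x \in \RR^{m-1}$ by $x_p = z \cdot t$ for an arbitrary $z \in \mathsf Z_P(p)$, then verify (i) that $x$ is well defined and (ii) that $x \in \ker H_F$. Point (ii) is routine: concatenating a factorization of $i$ with one of $j$ yields a factorization of $i \oplus j$ whenever the latter is finite, so the telescoping identity built into the definition of $x$ makes every facet equation hold. Point (i) amounts to showing $z - z' \in \mathrm{span}_\RR(\text{rows of } M_\rho)$ for every pair with $\varphi_P(z) = \varphi_P(z') \neq \infty$, that is, that $\rho$ generates the entire ``finite part'' of $\ker\varphi_P$ and that the row space of $M_\rho$ is exactly the span $L$ of all such differences.

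The main obstacle, and the step I would treat most carefully, is this last claim, since a presentation of $P$ is obtained by discarding the generators of $\ker\varphi_P$ that involve $\infty$, so a priori it might fail to reconnect two finite factorizations that the discarded generators had joined through $\infty$. The observation that rescues the argument is again that $\infty$ is absorbing: an elementary congruence move applied to a tuple factoring a finite element can never use a generator $(w, w')$ with $\varphi_P(w) = \infty$, as that would require $w \le z$ with $\varphi_P(z) = \infty$; hence any chain of moves joining two finite factorizations stays among finite factorizations and uses only the relations of $\rho$. Consequently $\rho$ generates the finite part of $\ker\varphi_P$, and since differences accumulate additively under symmetry, transitivity, and translation, this yields $L = \mathrm{span}_\ZZ(\text{rows of } M_\rho)$, so $\rk(M_\rho) = \dim L$ is independent of $\rho$ and equals the codimension of $\ker M_\rho$. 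Assembling the pieces, $\pi$ restricts to an isomorphism $\ker H_F \to \ker M_\rho$, whence $\dim F = \dim \ker H_F = \dim \ker M_\rho = k - \rk(M_\rho)$.
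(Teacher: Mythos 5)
Your proposal is correct, and it takes a genuinely different route from the paper's proof. The paper argues entirely at the level of the matrix $H_F$: after fixing a linear extension of $P$ with the atoms last, it performs row and column operations to bring $H_F$ into the block form~\eqref{eq:matrixform}, with $-I_{m-1-k}$ and a matrix of factorizations on top and $\bigl(0 \mid M_\rho\bigr)$ below, and then reads off $\rk(H_F) = (m-1-k) + \rk(M_\rho)$; the row operations encode the same telescoping-along-chains identity you use (Remark~\ref{r:chains}), but executed as Gaussian elimination rather than as a map of solution spaces. You instead exhibit a linear isomorphism $\pi\colon \ker H_F \to \ker M_\rho$ by projecting onto the atom coordinates, with injectivity and the construction of the inverse both resting on the identity $x_p = z \cdot \pi(x)$ for $z \in \mathsf Z_P(p)$. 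The comparison cuts both ways. The paper's reduction only produces one particular presentation matrix inside $H_F$, so to get the statement for an arbitrary presentation it invokes, without proof, the assertion that any two presentation matrices of $P$ have equal rank; moreover, certifying that the lower-right block really is a presentation matrix requires the connectivity argument about $\nabla_{p_w}$. Your proof avoids both of these: it works uniformly for any presentation $\rho$, and it supplies exactly the ingredient the paper leaves implicit, namely that the row span of $M_\rho$ equals the span of all differences $z - z'$ of factorizations of a common non-nil element. Your justification of that key lemma is the delicate point, and it is handled correctly: since $\infty$ is absorbing, $\mathsf Z_P(\infty)$ is an order ideal of $\NN^k$, so an elementary congruence move applied to a factorization of a non-nil element can never invoke a generator discarded in Definition~\ref{d:posetminimalpresentation}, and hence every chain joining two finite factorizations uses only relations of $\rho$. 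The price you pay is the hands-on verification of well-definedness and surjectivity of the inverse map, which the paper's matrix bookkeeping absorbs silently; the payoff is a self-contained proof that also establishes the presentation-independence of $\rk(M_\rho)$ rather than assuming it.
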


Before proving Theorem~\ref{t:posetdimension}, we provide an example to illustrate the proof structure.

\begin{example}\label{e:posetdimension}
Consider the Kunz poset $P$ depicted in Figure~\ref{sf:nosemigroupsexample}, and let 
\[
H_F =
\begin{pmatrix}
\phantom{-}0 & \phantom{-}0 & \phantom{-}2 & \phantom{-}0 & \phantom{-}0 & -1 & \phantom{-}0\\
\phantom{-}0 & -1 & \phantom{-}1 & \phantom{-}0 & \phantom{-}0 & \phantom{-}0 & \phantom{-}1\\
-1 & \phantom{-}0 & \phantom{-}0 & \phantom{-}1 & \phantom{-}1 & \phantom{-}0 & \phantom{-}0\\
\phantom{-}0 & \phantom{-}0 & \phantom{-}0 & \phantom{-}0 & \phantom{-}0 & -1 & \phantom{-}2\\
-1 & \phantom{-}0 & \phantom{-}1 & \phantom{-}0 & \phantom{-}0 & \phantom{-}1 & \phantom{-}0\\
-1 & \phantom{-}1 & \phantom{-}0 & \phantom{-}0 & \phantom{-}0 & \phantom{-}0 & \phantom{-}1\\
\end{pmatrix}
\]
denote the hyperplane matrix of the corresponding face $F \subseteq P_8$, with one row for each facet of $P_8$ containing $F$.  The second row, for instance, indicates all points in $F$ satisfy $x_3 + x_7 = x_2$, and that $3,7 \preceq 2$ in $P$.  
We begin by fixing a linear extension of $P$ where the atoms ($3$, $4$, $5$, and $7$) occur before the non-zero non-atoms ($2$, $6$, and $1$), e.g., 
\[
0 \preceq 3 \preceq 4 \preceq 5 \preceq 7 \preceq 2 \preceq 6 \preceq 1.  
\]
Rearranging the columns of $H_F$ to list the non-atoms in the above order followed by the atoms, 
then subsequently rearranging the rows so that each of the first 3 columns has $-1$ on the diagonal, yields
\[
\begin{blockarray}{ccc|cccc}
\phantom{-}2 & \phantom{-}6 & \phantom{-}1 & \phantom{-}3 & \phantom{-}4 & \phantom{-}5 & \phantom{-}7\\
\begin{block}{(ccc|cccc)}
-1 & \phantom{-}0 & \phantom{-}0 & \phantom{-}1 & \phantom{-}0 & \phantom{-}0 & \phantom{-}1\\
\phantom{-}0 & -1 & \phantom{-}0 & \phantom{-}2 & \phantom{-}0 & \phantom{-}0 & \phantom{-}0\\
\phantom{-}1 & \phantom{-}0 & -1 & \phantom{-}0 & \phantom{-}0 & \phantom{-}0 & \phantom{-}1\\
\BAhline
\phantom{-}0 & -1 & \phantom{-}0 & \phantom{-}0 & \phantom{-}0 & \phantom{-}0 & \phantom{-}2\\
\phantom{-}0 & \phantom{-}1 & -1 & \phantom{-}1 & \phantom{-}0 & \phantom{-}0 & \phantom{-}0\\
\phantom{-}0 & \phantom{-}0 & -1 & \phantom{-}0 & \phantom{-}1 & \phantom{-}1 & \phantom{-}0\\
\end{block}
\end{blockarray}.
\]
The upper-left submatrix is assuredly lower diagonal by the choice in column order.  As such, we may clear any positive entries from the first 3 columns by adding positive integer multiples of the first 3 rows accordingly.  In the resulting matrix, each row has precisely one nonzero entry in the first 3 columns (specifically, $-1$), and the remaining row entries comprise a factorization for the label of the column with $-1$.  
\[
\begin{blockarray}{ccc|cccc}
\phantom{-}2 & \phantom{-}6 & \phantom{-}1 & \phantom{-}3 & \phantom{-}4 & \phantom{-}5 & \phantom{-}7\\
\begin{block}{(ccc|cccc)}
-1 & \phantom{-}0 & \phantom{-}0 & \phantom{-}1 & \phantom{-}0 & \phantom{-}0 & \phantom{-}1\\
\phantom{-}0 & -1 & \phantom{-}0 & \phantom{-}2 & \phantom{-}0 & \phantom{-}0 & \phantom{-}0\\
\phantom{-}0 & \phantom{-}0 & -1 & \phantom{-}1 & \phantom{-}0 & \phantom{-}0 & \phantom{-}2\\
\BAhline
\phantom{-}0 & -1 & \phantom{-}0 & \phantom{-}0 & \phantom{-}0 & \phantom{-}0 & \phantom{-}2\\
\phantom{-}0 & \phantom{-}0 & -1 & \phantom{-}3 & \phantom{-}0 & \phantom{-}0 & \phantom{-}0\\
\phantom{-}0 & \phantom{-}0 & -1 & \phantom{-}0 & \phantom{-}1 & \phantom{-}1 & \phantom{-}0\\
\end{block}
\end{blockarray}.
\]
At this point, subtracting one of the first 3 rows from each of the remaining rows yields
\[
\begin{blockarray}{ccc|cccc}
\phantom{-}2 & \phantom{-}6 & \phantom{-}1 & \phantom{-}3 & \phantom{-}4 & \phantom{-}5 & \phantom{-}7\\
\begin{block}{(ccc|cccc)}
-1 & \phantom{-}0 & \phantom{-}0 & \phantom{-}1 & \phantom{-}0 & \phantom{-}0 & \phantom{-}1\\
\phantom{-}0 & -1 & \phantom{-}0 & \phantom{-}2 & \phantom{-}0 & \phantom{-}0 & \phantom{-}0\\
\phantom{-}0 & \phantom{-}0 & -1 & \phantom{-}1 & \phantom{-}0 & \phantom{-}0 & \phantom{-}2\\
\BAhline
\phantom{-}0 & \phantom{-}0 & \phantom{-}0 & -2 & \phantom{-}0 & \phantom{-}0 & \phantom{-}2\\
\phantom{-}0 & \phantom{-}0 & \phantom{-}0 & \phantom{-}2 & \phantom{-}0 & \phantom{-}0 & -2\\
\phantom{-}0 & \phantom{-}0 & \phantom{-}0 & -1 & \phantom{-}1 & \phantom{-}1 & -2\\
\end{block}
\end{blockarray}
\]
wherein the upper-left block is $-I_3$, the lower-left block is the zero-matrix, each row in the upper-right block is a factorization of a non-atom of $P$, and the lower-right block is a presentation matrix $M_\rho$ for $P$ obtained for the (non-minimal) presentation
\[
\rho = \big\{
((0,0,0,2), (2,0,0,0)), 
\quad
((2,0,0,0), (0,0,0,2)),
\quad
((0,1,1,0), (1,0,0,2))
\big\}.
\]
This is the core of the proof of Theorem~\ref{t:posetdimension}:\ the row operations used to transform the upper-left block into $-I_3$ effectively perform the same recursive process used to obtain poset element factorizations described in Remark~\ref{r:chains}.  
\end{example}

\begin{proof}[Proof of Theorem~\ref{t:posetdimension}]
Fix a linear extension $0 \preceq p_1 \preceq \cdots \preceq p_{m-1}$ of $P$ where $p_1, \ldots, p_k$ are the atoms of $P$ (for instance, one could order by maximum factorization length and then break ties arbitrarily, since no elements with equal maximum factorization length are comparable).  
In what follows, we will perform elementary row and column operations on $H_F$ to obtain a matrix of the form
\begin{equation}\label{eq:matrixform}
\begin{blockarray}{ccc|ccc}
p_{k+1} & \cdots & p_{m-1} & p_1 & \cdots & p_k\\
\begin{block}{(ccc|ccc)}
\BAmulticolumn{3}{c|}{\multirow{3}{*}{$\mathbigger{-I_{m-1-k}}$}} 
&\BAmulticolumn{3}{c}{\multirow{3}{*}{$\mathbigger{A}$}}\\
&&&&\\
&&&&\\
\BAhline
\BAmulticolumn{3}{c|}{\multirow{3}{*}{$\mathbigger{0}$}} 
& \BAmulticolumn{3}{c}{\multirow{3}{*}{$\mathbigger{M_\rho}$}}\\
&&&&\\
&&&&\\
\end{block}
\end{blockarray}
\end{equation}
where $M_\rho$ is some presentation matrix of $P$ and $A$ is a matrix whose $i$th row is a factorization of $p_{k+i}$ for each $i \le m - k - 1$.  

First, reorder the columns of $H_F$ as indicated in~\eqref{eq:matrixform}.  
By construction,  the left hand columns are precisely those with a $-1$ in at least one entry.
As such, we can reorder the rows so that the upper-left $(m-1-k) \times (m-1-k)$ submatrix has $-1$'s along the diagonal and (necessarily, due to the chosen column order) is lower triangular with non-negative integers below the diagonal.  Now, we eliminate each positive entry below the diagonal in this square submatrix by adding the appropriate row above, yielding the desired upper blocks of the matrix in~\eqref{eq:matrixform}.

Now, each row in the lower blocks of the matrix has $-1$ in exactly one entry in the lower left block.  We can thus eliminate all remaining positive entries in the left hand blocks by adding one or more of the first $m - k - 1$ rows as needed, leaving exactly one nonzero entry in each row (namely, $-1$ in some column $p_w$), and non-negative integers $w_1' + v_1, \ldots, w_k' + v_k$ in the last $k$ columns (as depicted below) with the property that $p_w = (w_1' + v_1)p_1 + \cdots + (w_k' + v_k)p_k$ in $P$ (that is, $(w_1' + v_1, \ldots, w_k' + v_k) \in \mathsf Z_P(p_w)$).  
\[
\begin{blockarray}{ccccc|ccc}
& p_{v} & & p_{w} & & p_1 & \cdots & p_k\\
\begin{block}{(ccccc|ccc)}
 & -1 &  & & & v_1 & \cdots & v_k\\
& &  & -1 &  & w_1 & \cdots & w_k\\
\BAhline
\qquad & 1 & \qquad & -1 & \qquad & w_1' & \cdots & w_k'\\
\end{block}
\end{blockarray}
\qquad
\rightsquigarrow
\qquad
\begin{blockarray}{ccccc|ccc}
& p_{v} & & p_{w} & & p_1 & \cdots & p_k\\
\begin{block}{(ccccc|ccc)}
 & -1 &  & & & v_1 & \cdots & v_k\\
& &  & -1 &  & w_1 & \cdots & w_k\\
\BAhline
\qquad & 0 & \qquad & -1 & \qquad & w_1' + v_1 & \cdots & w_k' + v_k\\
\end{block}
\end{blockarray}
\]

At this stage of the row reduction process, each row has exactly one nonzero entry in the left half of the matrix (namely, a $-1$ in some column $p_w$), and a factorization of $p_w$ in the last $k$ columns.  
We claim that, for each non-atom $p_w$ and each connected component $Z$ of $\nabla_{p_w}$, there exists a row with $-1$ in the column $p_w$ whose factorization in the last $k$ columns lies in $Z$.  Indeed, suppose $i \in \supp(z)$ for some $z \in Z$.  Then $p_i \preceq p_w$ in $P$, so one of the rows of the original matrix $H_F$ corresponds to the equality $x_{p_i} + x_{p_w - p_i} = x_{p_w}$ satisfied by all points in $F$.  In the current matrix, that row still has $-1$ in the column $p_w$, and the factorization $z'$ of $p_w$ in the last $k$ columns has a positive value in the column $p_i$.  As such, $z' \in Z$, and the claim is proven.  

For the last stage of the row-reduction process, from each row in the lower half of the matrix with $-1$ in the column $p_w$, subtract the appropriate row from the top half of the matrix, yielding 0 in the column $p_w$ and a trade for $P$ in the final $k$ columns, as depicted below.  
Let $Z$ denote the connected component of $\nabla_{p_w}$ containing the factorization of $p_w$ in the rows of $A$.  By the claim in the previous paragraph, for each connected component $Z'$ of $\nabla_{p_w}$ other than $Z$, there is at least one trade between a factorization in $Z$ and a factorization in $Z'$ occuring in the lower right block.  As such, the lower right block $M_\rho$ is in fact a presentation matrix of $P$ for some finite presentation $\rho$ of $P$.  

\[
\begin{blockarray}{ccc|ccc}
 & p_{w} & & p_1 & \cdots & p_k\\
\begin{block}{(ccc|ccc)}
  & -1 &  & w_1 & \cdots & w_k\\
\BAhline
 \qquad & -1 & \qquad & w_1' & \cdots & w_k'\\
\end{block}
\end{blockarray}
\qquad
\rightsquigarrow
\qquad
\begin{blockarray}{ccc|ccc}
& p_{w} & & p_1 & \cdots & p_k\\
\begin{block}{(ccc|ccc)}
& -1 &  & w_1 & \cdots & w_k\\
\BAhline
\qquad & 0 & \qquad & w_1' - w_1 & \cdots & w_k' - w_k\\
\end{block}
\end{blockarray}
\]
The above steps ensure the matrix now has the form in~\eqref{eq:matrixform}.  
Substitution then yields
\begin{align*}
\dim F &= m-1 - \rk(H_F) \\
&= m - 1 - \big(m - 1 - k + \rk(M_\rho)\big) \\
&= k - \rk(M_\rho).
\end{align*}

We complete the proof by noting that any finite presentation $\rho'$ of $P$ has as a subset a minimal presentation $\rho$ of $P$, and any row of $M_{\rho'}$ not appearing in $M_\rho$ is an integral linear combination of rows of $M_\rho$, since the corresponding trade in $\rho'$ can be obtained from trades in $\rho$ via translation and transitive closure.  As such, $M_\rho$ and $M_{\rho'}$ have identical row span, and thus $\rk(M_{\rho'}) = \rk(M_\rho)$.  On the other hand, if $\rho$ is a minimal presentation of $P$ and $z,z' \in \mathsf Z_P(p)$ for some $p \in P$, then $(z,z')$ can be obtained via translation and transitivity from trades in $\rho$, so $z - z'$ is an integral linear combination of the rows of $M_\rho$ and visa versa.  As such, if $\rho$ and $\rho'$ are both minimal presentations of $P$, then any row of $M_{\rho'}$ is an integral linear combination of rows of $M_\rho$, so $\rk(M_{\rho'}) = \rk(M_\rho)$.  This ensures any two presentation matrices have equal rank, thereby completing the proof.  
\end{proof}

\begin{remark}
Theorem~\ref{t:posetdimension} yields a more practical method of computing (by hand) the dimension of the face $F \subseteq P_m$ corresponding to a given Kunz poset $P$.  The first step involves finding the minimal presentation of $P$, which can be readily obtained from the factorization graphs of the non-nil elements of $P$ (of which there are only finitely many), and the resulting matrix $M_\rho$ is substantially smaller than the full hyperplane matrix $H_F$ of $F$.  
\end{remark}

\section{Counting minimal relations for semigroups in a given face}
\label{sec:minpressize}

Any two numerical semigroups $S$ and $S'$ lying in the same face $F$ of the Kunz polyhedron share several invariant values (for instance, $\mathsf e(S) = \mathsf e(S')$ and $\mathsf t(S) = \mathsf t(S')$).  
In this section, we prove $S$ and $S'$ also have minimal presentations of equal cardinality.  

\begin{notation}\label{n:fhat}
Throughout this section, unless otherwise stated, $N$ denotes a (not necessarily Kunz) finite partly cancellative nilsemigroup with $\mathcal A(N) = \{p_1, \ldots, p_k\}$.  
Given $z = (z_0, z_1, \ldots, z_k) \in \NN^{k+1}$ or $z = (z_1, \ldots, z_k) \in \NN^k$, let 
\[
\wh z = (z_1, \ldots, z_k) \in \NN^k.
\]
The \emph{support} of $z$ is the set 
\[
\supp(z) = \{i : z_i > 0\} \subset \{0, \ldots, k\}.
\]
Given a set $Z \subset \NN^{k+1}$, the \emph{support} of $Z$ is the set 
\[
\supp(Z) = \bigcup_{z \in Z} \supp(z),
\]
and for each $i \in \supp(Z)$, define 
\[
Z - e_i = \{z - e_i : z \in Z \text{ with } i \in \supp(z)\}.
\]
Lastly, if $N$ is a Kunz nilsemigroup with $G = N \setminus \{\infty\}$, then define
\[
\ol z = z_1p_1 + \cdots + z_kp_k \in G.
\]
In particular, if $z \in \mathsf Z_N(p)$ with $p \ne \infty$, then $\ol z = p$.  
\end{notation}

\begin{defn}\label{d:outerbetti}
An \emph{outer Betti element} of $N$ is a set $B \subset \mathsf Z_N(\infty)$ such that 
\begin{enumerate}[(i)]
\item 
for each $i \in \supp(B)$, we have $B - e_i = \mathsf Z_N(p)$ for some $p \in N \setminus \{\infty\}$, and
\item 
the graph $\nabla_B$ is connected. 
\end{enumerate}
Likewise, an \emph{outer Betti element} of a Kunz poset $P$ is an outer Betti element of its corresponding Kunz nilsemigroup.  
\end{defn}


We now explore several examples illustrating the nuances of Definition~\ref{d:outerbetti} and the intuition behind its use in obtaining minimal presentations of numerical semigroups.  

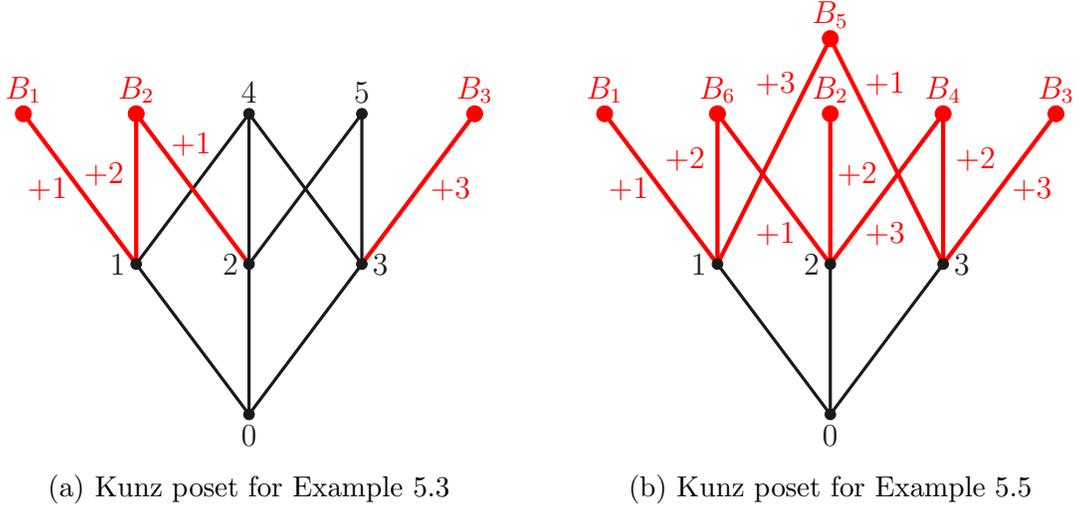
\begin{figure}[t]
\centering
\begin{subfigure}[b]{.45\linewidth}
\centering
\begin{tikzpicture}
\def\outerBCol{red}
\def\posetCol{black!90}
\coordinate (0) at (1,0);
\coordinate (1) at (-0.5,2);
\coordinate (2) at (1,2);
\coordinate (3) at (2.5,2);
\coordinate (4) at (1,4);
\coordinate (5) at (2.5,4);
\coordinate (B1) at (-2, 4);
\coordinate (B2) at (-0.5, 4);
\coordinate (B3) at (4, 4);

\draw[very thick,\posetCol] (0) -- (1); 
\draw[very thick,\posetCol] (0) -- (2); 
\draw[very thick,\posetCol] (0) -- (3); 

\draw[very thick,\posetCol] (1) -- (4); 
\draw[very thick,\posetCol] (2) -- (4); 
\draw[very thick,\posetCol] (2) -- (5); 
\draw[very thick,\posetCol] (3) -- (4); 
\draw[very thick,\posetCol] (3) -- (5); 

\draw[ultra thick, \outerBCol] (1) -- (B1) node[midway,left] {$+1$};
\draw[ultra thick, \outerBCol] (1) -- (B2) node[pos=.6,left] {$+2$};
\draw[ultra thick, \outerBCol] (2) -- (B2) node[pos=.8,right] {$+1$};
\draw[ultra thick, \outerBCol] (3) -- (B3) node[midway,right] {$+3$};

\filldraw[\posetCol] (0) circle (2pt) node[anchor=north] {$0$};

\filldraw[\posetCol] (1) circle (2pt) node[anchor=east] {$1$};
\filldraw[\posetCol] (2) circle (2pt) node[anchor=east] {$2$};
\filldraw[\posetCol] (3) circle (2pt) node[anchor=west] {$3$};

\filldraw[\posetCol] (4) circle (2pt) node[anchor=south] {$4$};
\filldraw[\posetCol] (5) circle (2pt) node[anchor=south] {$5$};

\filldraw[\outerBCol] (B1) circle (3pt) node[anchor=south] {$B_1$};
\filldraw[\outerBCol] (B2) circle (3pt) node[anchor=south] {$B_2$};
\filldraw[\outerBCol] (B3) circle (3pt) node[anchor=south] {$B_3$};
\end{tikzpicture}
\caption{Kunz poset for Example~\ref{e:outerbettisimple}}
\label{sf:outerbettisimple}
\end{subfigure}%
\hspace{0.04\linewidth}
\begin{subfigure}[b]{.45\linewidth}
\centering
\begin{tikzpicture}

\def\outerBCol{red}
\def\posetCol{black!90}

\coordinate (0) at (3,0);
\coordinate (1) at (1.5,2);
\coordinate (2) at (3,2);
\coordinate (3) at (4.5,2);
\coordinate (B1) at (0, 4);
\coordinate (B2) at (3, 4);
\coordinate (B3) at (6, 4);
\coordinate (B4) at (4.5, 4);
\coordinate (B5) at (3, 5);
\coordinate (B6) at (1.5, 4);

\draw[very thick,\posetCol] (0) -- (1);
\draw[very thick,\posetCol] (0) -- (2);
\draw[very thick,\posetCol] (0) -- (3);

\draw[ultra thick,\outerBCol] (1) -- (B1) node[midway,left] {$+1$};
\draw[ultra thick,\outerBCol] (1) -- (B6) node[pos=0.7,left] {$+2$};
\draw[ultra thick,\outerBCol] (1) -- (B5) node[pos=0.8,left] {$+3$};
\draw[ultra thick,\outerBCol] (2) -- (B2) node[pos=0.6,right] {$\!+2$};
\draw[ultra thick,\outerBCol] (2) -- (B6) node[pos=0.2,left] {$+1$};
\draw[ultra thick,\outerBCol] (2) -- (B4) node[pos=0.2,right] {$+3$};
\draw[ultra thick,\outerBCol] (3) -- (B3) node[midway,right] {$+3$};
\draw[ultra thick,\outerBCol] (3) -- (B4) node[pos=0.7,right] {$+2$};
\draw[ultra thick,\outerBCol] (3) -- (B5) node[pos=0.8,right] {$+1$};

\filldraw[\posetCol] (0) circle (2pt) node[anchor=north] {$0$};

\filldraw[\posetCol] (1) circle (2pt) node[anchor=east] {$1$};
\filldraw[\posetCol] (2) circle (2pt) node[anchor=east] {$2$};
\filldraw[\posetCol] (3) circle (2pt) node[anchor=west] {$3$};

\filldraw[\outerBCol] (B1) circle (3pt) node[anchor=south] {$B_1$};
\filldraw[\outerBCol] (B2) circle (3pt) node[anchor=south] {$B_2$};
\filldraw[\outerBCol] (B3) circle (3pt) node[anchor=south] {$B_3$};
\filldraw[\outerBCol] (B4) circle (3pt) node[anchor=south] {$B_4$};
\filldraw[\outerBCol] (B5) circle (3pt) node[anchor=south] {$B_5$};
\filldraw[\outerBCol] (B6) circle (3pt) node[anchor=south] {$B_6$};
\end{tikzpicture}
\caption{Kunz poset for Example~\ref{e:outerbettimed}}
\label{sf:outerbettimed}
\end{subfigure}%
\caption{Kunz posets with outer Betti elements and relations depicted by thick (red) lines.}
\label{f:outerbettisshort}
\end{figure}

\begin{example}\label{e:outerbettisimple}
Consider the semigroup $S = \<6,7,8,9\>$ from Example~\ref{e:minpres}, and let $P$ and $N$ denote the corresponding Kunz poset and nilsemigroup, respectively.  Let us first find the outer Betti elements of $N$.  By Definition~\ref{d:outerbetti}(i), each factorization appearing in an outer Betti element must lie in $\mathsf Z_N(\infty)$, but with the property that decrementing any nonzero entry yields a factorization outside of $\mathsf Z_N(\infty)$.  The factorizations with this property are
$$
(2,0,0), 
\qquad
(1,1,0), 
\qquad
(0,2,1), 
\qquad \text{and} \qquad
(0,0,2).
$$
By Definition~\ref{d:outerbetti}(ii), any two of these that lie in the same outer Betti element must have overlapping support.  Furthermore, $(2,0,0)$ and $(1,1,0)$ also cannot appear together in an outer Betti element $B$ by Definition~\ref{d:outerbetti}(i) since $B - e_1 = \{(1,0,0), (0,1,0)\}$ contains factorizations of distinct elements of $N$.  We also note the elements of $N$ all have singleton length set, so by Definition~\ref{d:outerbetti}(i) any non-singleton outer Betti element must consist of factorizations of equal length, and since $(0,2,1)$ is the only factorization above with length 3, it cannot occur in a non-singleton outer Betti element.  
As~such, all outer Betti elements of $N$ are singleton.  One can then check
$$
B_1 = \{(2, 0, 0)\},
\qquad
B_2 = \{(1, 1, 0)\},
\qquad \text{and} \qquad
B_3 = \{(0, 0, 2)\}$$
all satisfy Definition~\ref{d:outerbetti}, while $B = \{(0,2,1)\}$ does not, since 
$$B - e_3 \subsetneq \mathsf Z_N(4) = \{(0,2,0), (1,0,1)\}.$$

We now return our attention to $S$.  The unique minimal presentation $\rho$ of $S$ has the relation $((0, 0, 2, 0), (0, 1, 0, 1))$ occurring at the Betti element $16 \in \Ap(S;6)$, along~with 
\[
((1, 0, 1, 0), (0, 2, 0, 0)), \quad 
((1, 0, 0, 1), (0, 1, 1, 0)), \quad
\text{and} \quad
((3, 0, 0, 0), (0, 0, 0, 2))
\]
occurring at the Betti elements $14, 15, 18 \notin \Ap(S;6)$, respectively.  
Comparing with the outer Betti elements of $N$, we see the singleton factorization in $B_1$, $B_2$, and $B_3$ each coincide with $\wh z\,'$ where $z'$ is the multiplicity-free factorization of 14, 15, and 18, respectively.  Here, the uniqueness of $\rho$ can be seen, in part, as a consequence of the fact that every outer Betti element of $N$ is singleton.  The poset $P$, along with its outer Betti elements, is depicted in Figure~\ref{sf:outerbettisimple}.  
\end{example}

\begin{example}\label{e:outerbettimerge}
Consider the numerical semigroup $S = \<9, 20, 30, 35\>$.  Its Kunz poset~$P$ (depicted in Figure~\ref{sf:outerbettimerge}) and Kunz nilsemigroup $N$ have outer Betti elements
\[
\begin{array}{r@{}c@{}l@{\qquad}r@{}c@{}l}
B_1 &{}={}& \{(0,1,1)\},
&
B_3 &{}={}& \{(1,2,0), (4,0,0)\},
\\
B_2 &{}={}& \{(2,0,1)\},
&
B_4 &{}={}& \{(3,1,0), (1,0,2), (0,3,0)\}.
\end{array}
\]
There are a total of 6 minimal presentations of $S$, each with 6 relations.  All contain 
\[
(0,3,0,0) \sim (0,0,2,0)
\qquad \text{and} \qquad
(0,2,1,0) \sim (0,0,0,2),
\]
which occur at Ap\'ery set elements.  The other 4 relations are obtained by choosing one factorization from each of $B_1$, $B_2$, $B_3$, and $B_4$ (which occur at the Betti elements $b_1 = 65$, $b_2 = 75$, $b_3 = 80$, and $b_4 = 90$ of $S$, respectively), prepending a 0 to each, then choosing one factorization from each of the sets 
$\mathsf Z_P(2)$, $\mathsf Z_P(3)$, $\mathsf Z_P(8)$, and $\mathsf Z_P(0)$, 
respectively, to pair with, and lastly prepending an appropriate number of copies of~$9$.  For~instance, choosing the first factorization from each $B_i$ above yields the relations
\[
\begin{array}{r@{\qquad}r}
(5,1,0,0) \sim (0,0,1,1),
&
(5,0,1,0) \sim (0,2,0,1),
\\
(5,0,0,1) \sim (0,1,2,0),
&
(10,0,0,0) \sim (0,3,1,0).
\end{array}
\]
In general, the first factorization in each relation above can be (i) any factorization $z$ of $b_i$ with $z_0 > 0$, or (ii) a factorization $z$ where $\hat z$ lies in an outer Betti element distinct from $B_i$ and whose corresponding element of $S$ precedes that of $B_i$ (as in the next example).  For this particular numerical semigroup, no factorizations of the latter form exist, so the above process yields a complete list of minimal presentations of $S$.  
Definition~\ref{d:m-centric} identifies the minimal presentation constructed in this manner.  
\end{example}

\begin{figure}[t]
\centering
\begin{subfigure}[b]{.45\linewidth}
\centering
\begin{tikzpicture}

\def\outerBCol{red}
\def\posetCol{black!90}

\coordinate (0) at (0,0);
\coordinate (2) at (0,2);
\coordinate (3) at (2,2);
\coordinate (8) at (4,2);
\coordinate (4) at (0,4);
\coordinate (5) at (2,4);
\coordinate (1) at (4,4);
\coordinate (6) at (0,6);
\coordinate (7) at (2,6);
\coordinate (B1) at (5, 4);
\coordinate (B2) at (4, 6);
\coordinate (B3) at (0, 8);
\coordinate (B4) at (2, 8);

\draw[very thick,\posetCol] (0) -- (2);
\draw[very thick,\posetCol] (0) -- (3);
\draw[very thick,\posetCol] (0) -- (8);

\draw[very thick,\posetCol] (2) -- (4);
\draw[very thick,\posetCol] (2) -- (1);
\draw[very thick,\posetCol] (2) -- (5);
\draw[very thick,\posetCol] (3) -- (6);
\draw[very thick,\posetCol] (3) -- (5);
\draw[very thick,\posetCol] (8) -- (1); 
\draw[very thick,\posetCol] (8) -- (7); 

\draw[very thick,\posetCol] (4) -- (6); 
\draw[very thick,\posetCol] (4) -- (7); 
\draw[very thick,\posetCol] (5) -- (7); 

\draw[ultra thick, \outerBCol] (8) -- (B1) node[midway,right] {$+3$};
\draw[ultra thick, \outerBCol] (3) -- (B1) node[pos=0.2,right] {$+8$};
\draw[ultra thick, \outerBCol] (4) -- (B2) node[pos=.61,right] {$+8$};
\draw[ultra thick, \outerBCol] (1) -- (B2) node[pos=.5,right] {$+2$};
\draw[ultra thick, \outerBCol] (6) -- (B3) node[pos=.6,left] {$+2$};
\draw[ultra thick, \outerBCol] (5) -- (B3) node[pos=.45,left] {$+3$};
\draw[ultra thick, \outerBCol] (7) -- (B4) node[pos=0.4,left] {$+2$};
\draw[ultra thick, \outerBCol] (6) -- (B4) node[pos=0.8,left] {$+3$};
\draw[ultra thick, \outerBCol] (1) -- (B4) node[pos=0.8,right] {$+8$};

\filldraw[\posetCol] (0) circle (2pt) node[anchor=north] {$0$};

\filldraw[\posetCol] (2) circle (2pt) node[anchor=east] {$2$};
\filldraw[\posetCol] (3) circle (2pt) node[anchor=east] {$3$};
\filldraw[\posetCol] (8) circle (2pt) node[anchor=west] {$8$};

\filldraw[\posetCol] (4) circle (2pt) node[anchor=south east] {$4$};
\filldraw[\posetCol] (5) circle (2pt) node[anchor=east] {$5$};

\filldraw[\posetCol] (1) circle (2pt) node[anchor=west] {$1$};
\filldraw[\posetCol] (6) circle (2pt) node[anchor=south east] {$6$};
\filldraw[\posetCol] (7) circle (2pt) node[anchor=south east] {$7$};

\filldraw[\outerBCol] (B1) circle (3pt) node[anchor=south] {$B_1$};
\filldraw[\outerBCol] (B2) circle (3pt) node[anchor=south] {$B_2$};
\filldraw[\outerBCol] (B3) circle (3pt) node[anchor=south] {$B_3$};
\filldraw[\outerBCol] (B4) circle (3pt) node[anchor=south] {$B_4$};
\end{tikzpicture}
\caption{The poset for Example~\ref{e:outerbettimerge}}
\label{sf:outerbettimerge}
\end{subfigure}%
\hspace{0.04\linewidth}
\begin{subfigure}[b]{.45\linewidth}
\centering
\begin{tikzpicture}

\def\outerBCol{red}
\def\posetCol{black!90}

\def\covfive{\posetCol}
\def\covsix{\posetCol}
\def\covseven{\posetCol}
\def\coveight{\posetCol}
\def\covnine{\posetCol}

\coordinate (0) at (0,0);
\coordinate (5) at (0,3);
\coordinate (6) at (1,3);
\coordinate (7) at (2,3);
\coordinate (8) at (3,3);
\coordinate (9) at (4,3);
\coordinate (10) at (0,5);
\coordinate (1) at (2,6);
\coordinate (2) at (3,6);
\coordinate (3) at (4,6);

\coordinate (4) at (0,7);

\draw[very thick,\covfive] (0) -- (5);
\draw[very thick,\covsix] (0) -- (6);
\draw[very thick,\covseven] (0) -- (7);
\draw[very thick,\coveight] (0) -- (8);
\draw[very thick,\covnine] (0) -- (9);

\draw[very thick,\covseven] (5) -- (1);
\draw[very thick,\coveight] (5) -- (2);
\draw[very thick,\covnine] (5) -- (3);
\draw[very thick,\covfive] (5) -- (10);
\draw[very thick,\covsix] (6) -- (1);
\draw[very thick,\covseven] (6) -- (2);
\draw[very thick,\coveight] (6) -- (3);
\draw[very thick,\covnine] (6) -- (4);
\draw[very thick,\covfive] (7) -- (1);
\draw[very thick,\covsix] (7) -- (2);
\draw[very thick,\covseven] (7) -- (3);
\draw[very thick,\coveight] (7) -- (4);
\draw[very thick,\covfive] (8) -- (2);
\draw[very thick,\covsix] (8) -- (3);
\draw[very thick,\covseven] (8) -- (4);
\draw[very thick,\covfive] (9) -- (3);
\draw[very thick,\covsix] (9) -- (4);

\draw[very thick,\covfive] (10) -- (4);

\filldraw[\posetCol] (0) circle (2pt) node[anchor=north] {$0$};

\filldraw[\posetCol] (5) circle (2pt) node[anchor=east] {$5$};
\filldraw[\posetCol] (6) circle (2pt) node[anchor=west] {$6$};
\filldraw[\posetCol] (7) circle (2pt) node[anchor=west] {$7$};
\filldraw[\posetCol] (8) circle (2pt) node[anchor=west] {$8$};
\filldraw[\posetCol] (9) circle (2pt) node[anchor=west] {$9$};

\filldraw[\posetCol] (10) circle (2pt) node[anchor=east] {$10$};

\filldraw[\posetCol] (1) circle (2pt) node[anchor=south] {$1$};
\filldraw[\posetCol] (2) circle (2pt) node[anchor=south] {$2$};
\filldraw[\posetCol] (3) circle (2pt) node[anchor=south] {$3$};
\filldraw[\posetCol] (4) circle (2pt) node[anchor=south] {$4$};

\end{tikzpicture}
\caption{The poset for Example~\ref{e:outerbettipartition}}
\label{sf:outerbettipartition}
\end{subfigure}%
\caption{Kunz posets with outer Betti elements and relations depicted by thick (red) lines.}
\label{f:outerbettistall}
\end{figure}
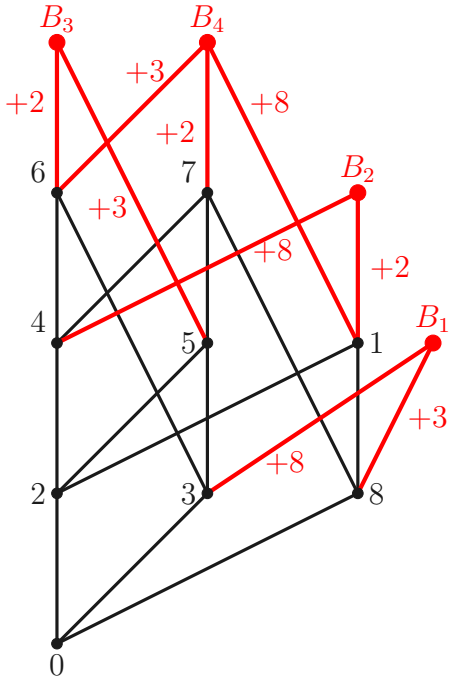
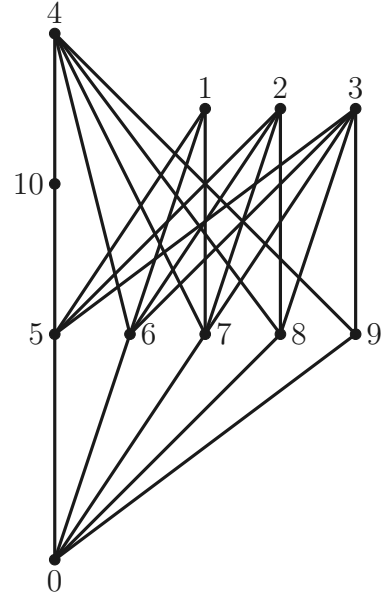

\begin{example}\label{e:outerbettimed}
The numerical semigroups $S = \<4,5,6,7\>$ and $S' = \<4,9,14,15\>$ lie in the same face of $P_4$ but the former has 5 Betti elements while the latter has 6.  Both~are maximal embedding dimension, and thus both have Kunz poset $P$ (depicted in Figure~\ref{sf:outerbettimed}) and Kunz nilsemigroup $N$.  One can check that $N$ has 6 outer Betti elements, namely
$$\begin{array}{l@{\qquad}l@{\qquad}l}
B_1 = \{(2,0,0)\}, 
&
B_2 = \{(0,2,0)\},
&
B_3 = \{(0,0,2)\},
\\
B_4 = \{(0,1,1)\}, 
&
B_5 = \{(1,0,1)\},
&
B_6 = \{(1,1,0)\}.
\end{array}$$
Since there is at least one outer Betti element in each equivalence class modulo 4, any numerical semigroup in the interior of $P_4$ must have at least 4 Betti elements.  Moreover, $\ol B_1 = \ol B_3$, but $(0,2,0,0)$ and $(0,0,0,2)$ cannot be factorizations of the same element of any numerical semigroup.
Lastly, $\ol B_2 = \ol B_5$, and the trades 
\[
(6,0,0,0) \sim (0,1,0,1)
\qquad \text{and} \qquad
(7,0,0,0) \sim (0,0,2,0)
\]
occur at the elements 24 and 28 in $S'$, respectively, while the trades 
\[
(3,0,0,0) \sim (0,1,0,1)
\qquad \text{and} \qquad
(3,0,0,0) \sim (0,0,2,0)
\]
both occur at the element 12 in $S$.  
\end{example}

\begin{lemma}\label{l:outerbettipartition}
Consider the graph with vertex set
\[
Z = \{z \in \mathsf Z_N(\infty) : z - e_i \notin \mathsf Z_N(\infty) \text{ for each } i \in \supp(z)\}
\]
such that two vertices $z, z' \in Z$ are connected by an edge whenever $z - e_i, z' - e_i \in \mathsf Z_N(p)$ for some $i \in \supp(z) \cap \supp(z')$ and $p \in N \setminus \{\infty\}$.  

\begin{enumerate}[(a)]
\item 
Each outer Betti element $B$ of $N$ is a connected component of $Z$, and the restriction of $Z$ to $B$ yields the graph $\nabla_B$.  

\item 
If $N$ is a Kunz nilsemigroup, then for each connected component $B$ of $Z$, we have $\ol z = \ol z'$ for all $z, z' \in B$.  

\end{enumerate}
\end{lemma}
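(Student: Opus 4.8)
The plan is to handle the two parts separately, drawing only on the definition of $Z$ and its edge relation, on Definition~\ref{d:outerbetti}, and on the fact that the sets $\mathsf Z_N(p)$, for $p \in N$ (including $p = \infty$), partition $\NN^k$. No global structure of $N$ beyond this is needed for part (a); part (b) additionally uses the group $G = N \setminus \{\infty\}$.

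For part~(a), I would fix an outer Betti element $B$ and verify three things in turn. First, $B \subseteq Z$: if $z \in B$ and $i \in \supp(z) \subseteq \supp(B)$, then condition~(i) of Definition~\ref{d:outerbetti} gives $B - e_i = \mathsf Z_N(p)$ for some $p \ne \infty$, so $z - e_i \in \mathsf Z_N(p)$ lies outside $\mathsf Z_N(\infty)$, whence $z \in Z$. Second, the edge relation of $Z$ restricted to $B$ coincides with $\nabla_B$: a $Z$-edge between $z, z' \in B$ forces $z_i, z_i' > 0$ at the witnessing coordinate $i$, hence a $\nabla_B$-edge; conversely a $\nabla_B$-edge supplies some $i \in \supp(z) \cap \supp(z') \subseteq \supp(B)$, and condition~(i) places both $z - e_i$ and $z' - e_i$ in the single set $\mathsf Z_N(p) = B - e_i$, producing a $Z$-edge. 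Since $\nabla_B$ is connected by condition~(ii), $B$ is connected in $Z$. Third, and most delicately, I would show $B$ has no edges leaving it: if $z \in B$ is $Z$-adjacent to $z' \in Z$ via coordinate $i$ and element $p$, then $i \in \supp(z) \subseteq \supp(B)$ gives $B - e_i = \mathsf Z_N(q)$ for some $q$, and since $z - e_i$ lies in both $\mathsf Z_N(p)$ and $\mathsf Z_N(q)$, disjointness of factorization sets forces $p = q$; then $z' - e_i \in \mathsf Z_N(p) = B - e_i$, and the definition of $B - e_i$ yields a $w \in B$ with $w - e_i = z' - e_i$, i.e.\ $z' = w \in B$. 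Thus $B$ is a maximal connected subset, hence a connected component, and the subgraph it induces is exactly $\nabla_B$.

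For part~(b), the key observation is that $z \mapsto \ol z = z_1 p_1 + \cdots + z_k p_k$ is a monoid homomorphism from $(\NN^k, +)$ to the abelian group $G$, defined on all of $\NN^k$ (in particular on $\mathsf Z_N(\infty)$), and that $\ol w = p$ for every $w \in \mathsf Z_N(p)$ with $p \ne \infty$. It therefore suffices to prove $\ol z = \ol z'$ for $Z$-adjacent $z, z'$ and then propagate the equality along a path inside the connected component $B$. If $z, z'$ are joined via coordinate $i$ and element $p$, then $z - e_i, z' - e_i \in \mathsf Z_N(p)$ give $\ol{z - e_i} = p = \ol{z' - e_i}$; adding $\ol{e_i} = p_i$ and using additivity yields $\ol z = p + p_i = \ol{z'}$. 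Note this argument uses only the $Z$-edge relation, so it applies to every connected component, whether or not it is an outer Betti element.

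The only genuinely subtle step is the ``no outgoing edges'' argument in part~(a); everything else is a direct unwinding of the definitions. That step hinges on the precise shape of $B - e_i$ as a full factorization set $\mathsf Z_N(q)$ (condition~(i)) together with the disjointness of the sets $\mathsf Z_N(\cdot)$, which together upgrade the relation $z' - e_i \in B - e_i$ to the conclusion $z' \in B$ itself.
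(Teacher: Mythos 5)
Your proof is correct and follows essentially the same route as the paper's: containment $B \subseteq Z$, identification of the induced edges with $\nabla_B$, the ``no outgoing edges'' step via $z' - e_i \in \mathsf Z_N(p) = B - e_i$ forcing $z' \in B$, and for part~(b) reduction to adjacent vertices plus additivity of $z \mapsto \ol z$. The only difference is one of exposition: you spell out the disjointness of the sets $\mathsf Z_N(\cdot)$ to identify $p$ with $q$ (which the paper leaves implicit), and in part~(b) you correctly invoke the edge relation directly rather than the outer-Betti condition, which is the right way to phrase it since a connected component of $Z$ need not be an outer Betti element.
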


\begin{proof}
It is clear any outer Betti element $B$ of $N$ is contained in $Z$ by Definition~\ref{d:outerbetti}(i), and that the restriction of the graph $Z$ to $B$ coincides with the graph $\nabla_B$.  Additionally, if $z \in B$ and $z' \in Z$ are connected by an edge, then there is some $i \in \supp(z) \cap \supp(z')$ with $z - e_i, z' - e_i \in \mathsf Z_N(p)$ for some $p \in N \setminus \{\infty\}$, so $\mathsf Z_N(p) + e_i \subset B$ and thus $z' \in B$.  This ensures $B$ equals the connected component of $Z$ containing it, proving part~(a).  

Next, by~transitivity and the connectivity of $B$, it suffices to assume that $z$ and $z'$ are connected by an edge, meaning there exists $i \in \supp(z) \cap \supp(z')$.  Then $B - e_i = \mathsf Z_N(p)$ for some $p \in N \setminus \{\infty\}$, so both $z - e_i$ and $z' - e_i$ are factorizations of $p$. Thus,
\[
\ol z = \ol{z - e_i} + p_i = \ol{z' - e_i} + p_i  = \ol z',
\]
thereby completing the proof.  
\end{proof}

\begin{example}\label{e:outerbettipartition}
The graph $Z$ in Lemma~\ref{l:outerbettipartition} can in general have connected components that are not outer Betti elements of $N$, even if $N$ is a Kunz nilsemigroup.  
Indeed, let $N$ denote the Kunz nilsemigroup of $S = \<11, 60, 72, 84, 96, 108\>$, depicted in Figure~\ref{sf:outerbettipartition}.  Clearly $(2,0,1,0,0) \in Z$, since $(1,0,1,0,0) \in \mathsf Z_N(1)$ and $(2,0,0,0,0) \in \mathsf Z_N(10)$, but
$$\mathsf Z_N(1) = \{(1,0,1,0,0), (0,2,0,0,0)\},$$
and $(1,2,0,0,0)$ lies outside of $Z$.  As such, $\mathsf Z_N(1) + e_1$ is not contained in any outer Betti element of $N$, and so $(2,0,1,0,0)$ cannot lie in any outer Betti element of $N$.  
\end{example}

\begin{remark}\label{r:computeouterbetti}
Lemma~\ref{l:outerbettipartition}(a) yields an algorithm to compute the outer Betti elements of any finite partly cancellative nilsemigroup.  Begin by computing the set $Z$ of minimal elements of $\mathsf Z_N(\infty)$ under the component-wise partial order.  
Next, compute the connected components of the graph in Lemma~\ref{l:outerbettipartition}, which has vertex set $Z$.  Each connected component $B$ is an outer Betti element if and only if Definition~\ref{d:outerbetti}(i) is satisfied.  
\end{remark}

In the rest of this section, assume $S = \<m, n_1, \ldots, n_k\>$ is a numerical semigroup with Kunz poset $P$ and Kunz nilsemigroup $N$, and let $p_i = \ol n_i \in \ZZ_m$ for $i = 1, \ldots, k$.  

\begin{defn}\label{d:m-centric}
Fix a minimal presentation $\rho$ of $S$.  A $(k+1)$-tuple $(z_0, z_1, \ldots, z_k)$ is said to be \emph{$m$-free} if $z_0 = 0$.  We say that a relation $(z,z') \in \rho$ is \emph{$m$-centric} if $\wh z \in \mathsf Z_P(\ol z)$, and that $\rho$ is \emph{$m$-centric} if every relation $(z,z') \in \rho$ is $m$-centric.  In particular, if $(z,z')$ is $m$-centric, then $z'$ must be $m$-free, and $z$ is $m$-free if and only if $\varphi_S(z) \in \Ap(S; m)$.  
\end{defn}

\begin{example}\label{e:m-centric}
Suppose $S = \<4,9,14,15\>$, and consider the factorization sets
\begin{align*}
\mathsf Z_S(24)
&= \{(6,0,0,0), (0,1,0,1)\}
\quad \text{and} \quad
\mathsf Z_S(28)
= \{(7,0,0,0), (1,1,0,1), (0,0,2,0)\}.
\end{align*}
Two outer Betti elements of the Kunz poset $P$ are $B_1 = \{(1,0,1)\}$ and $B_2 = \{(0,2,0)\}$.  Under Definition~\ref{d:m-centric}, the relation $((1,1,0,1), (0,0,2,0))$ is not $m$-centric, since 
\[
\ol{(1,1,0,1)} = \ol{(0,0,2,0)} = 0,
\]
but neither $(1,0,1)$ nor $(0,2,0)$ lie in $\mathsf Z_P(0)$.  In fact, any $m$-centric minimal presentation of $S$ must contain the relation $(z,z') = ((7,0,0,0), (0,0,2,0))$ since $\mathsf Z_P(0) = \{(0,0,0)\}$ (and, as we will see in Lemma~\ref{l:m-centricexists}(b), since 7 is the largest first coordinate of any factorization of~28).  
\end{example}



\begin{lemma}\label{l:m-centricexists}
\text{ }
\begin{enumerate}[(a)]
\item 
Every numerical semigroup has an $m$-centric minimal presentation.  

\item 
In any $m$-centric minimal presentation $\rho$ of $S$, each $(z,z') \in \rho$ satisfies \[
\varphi_S(z) - z_0 m \in \Ap(S;m).
\]
In particular, $z_0 >0$ if and only if $b = \varphi_S(z) \notin \Ap(S;m)$. 

\end{enumerate}
\end{lemma}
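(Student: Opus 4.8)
The plan is to dispatch part (b) as essentially an unwinding of definitions, and to prove part (a) by re-choosing the spanning trees that govern minimal presentations. The common starting point for both is to reformulate $m$-centrality at the level of a single factorization: for $z \in \mathsf Z_S(b)$, the condition $\wh z \in \mathsf Z_P(\ol z)$ defining $m$-centrality is equivalent, via Theorem~\ref{t:posettosemigroupminpres}, to $\varphi_S(0,\wh z) = \varphi_S(z) - z_0 m \in \Ap(S;m)$. Indeed, since $\mathsf Z_S(a_i) = \{(0,w) : w \in \mathsf Z_P(i)\}$ with $i = \ol z$, the tuple $(0,\wh z)$ factors the Ap\'ery element $a_{\ol z}$ exactly when $\wh z \in \mathsf Z_P(\ol z)$. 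I will call such a $z$ an \emph{$m$-centric factorization}, so that a relation is $m$-centric precisely when its first coordinate is.

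For part (b), let $(z,z') \in \rho$ and set $b = \varphi_S(z)$. The equivalence above gives $\varphi_S(z) - z_0 m = a \in \Ap(S;m)$ directly, which is the first assertion. For the ``in particular,'' I would argue: if $z_0 = 0$ then $b = a \in \Ap(S;m)$, whereas if $z_0 > 0$ then $b - m = a + (z_0 - 1)m \in S$ forces $b \notin \Ap(S;m)$; hence $z_0 > 0 \iff b \notin \Ap(S;m)$.

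For part (a), I would construct the presentation one Betti element at a time. By Theorem~\ref{t:allminimalpresentations}, a set $\rho$ is a minimal presentation exactly when, for each Betti element $b$ (those with $\nabla_b$ disconnected), $\rho$ contains $c_b - 1$ relations among factorizations of $b$ whose edges reconnect the $c_b$ connected components of $\nabla_b$, and contains no relations at non-Betti elements. Since any two factorizations of $b$ form a valid relation, I have complete freedom in choosing these connecting edges. The key observation is that every Betti element admits an $m$-centric factorization: writing $a = a_{\ol b}$ for the Ap\'ery element in the class of $b$, we have $a \le b$ with $a \equiv b \pmod m$, and $\mathsf Z_P(\ol b) \ne \emptyset$ by Theorem~\ref{t:posettosemigroupminpres} (as $a \in S$), so $z^\ast = ((b-a)/m,\, w)$ for any $w \in \mathsf Z_P(\ol b)$ is a factorization of $b$ with $\varphi_S(z^\ast) - z^\ast_0 m = a \in \Ap(S;m)$.

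I would then fix, for each Betti element $b$, such a factorization $z^\ast \in \mathsf Z_S(b)$ and take the relations $(z^\ast, z_i)$ joining the component of $z^\ast$ to one representative $z_i$ of each other component of $\nabla_b$. This ``star'' is a spanning tree of the component graph, so the resulting $\rho$ is minimal, and every relation has $m$-centric first coordinate $z^\ast$ and is therefore $m$-centric. The main point to verify carefully is that the star simultaneously satisfies both conditions of Theorem~\ref{t:allminimalpresentations} at every element — that it contributes exactly $c_b - 1$ relations reconnecting $\nabla_b$ at each Betti element while introducing no relations at non-Betti elements — which is where the bookkeeping lives; the existence of the $m$-centric endpoint itself is immediate once $\mathsf Z_P(\ol b) \ne \emptyset$ is noted.
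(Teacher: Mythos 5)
Your proposal is correct and takes essentially the same route as the paper: part (b) is the same unwinding of Theorem~\ref{t:posettosemigroupminpres}, and part (a) rests on the identical key factorization $z^\ast = \big((b-a)/m,\, w\big)$ with $a \in \Ap(S;m)$, $a \equiv b \bmod m$, and $w \in \mathsf Z_P(\ol{b})$, joined by a star of relations to representatives of the other connected components of $\nabla_b$. The only cosmetic difference is that the paper obtains minimality by replacing the relations at each Betti element inside an existing minimal presentation (citing the replacement principle from the literature), whereas you assemble the presentation from scratch and appeal to Theorem~\ref{t:allminimalpresentations} -- a reading of that theorem (any set of relations satisfying conditions (i) and (ii) at every element is a minimal presentation) that the paper itself also uses later, in Corollary~\ref{c:kunzcoordformula}.
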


\begin{proof}
Fix a minimal presentation $\rho$ of $S$.  Any relation in $\rho$ occurring within the Ap\'ery poset will trivially be $m$-centric, as neither factorization involves the multiplicity.  Now, consider a Betti element $b \notin \Ap(S;m)$. By construction, $b = p + am$ for some $p \in \Ap(S;m) $ and $a > 0$, so there exists a factorization $z$ of $b$ with $z = ae_0 + z''$ for some $z'' \in \mathsf Z_S(p)$.  Any relation of the form $(z,z')$ for some $z'$ in a different connected component than $z$ is $m$-centric, so replacing each relation in $\rho$ occurring at $b$ with such a relation yields a minimal presentation by the paragraph after Theorem 10 in \cite{numericalappl}, one that is $m$-centric by construction.  This proves part~(a).  

Next, suppose $\rho$ is $m$-centric, fix $(z,z') \in \rho$, let $b = \varphi_S(z) = \varphi_S(z')$ and $a \in \Ap(S;m)$ with $a \equiv b \bmod m$.  Since $\wh z \in \mathsf Z_P(\ol z)$, we have $(0, z_1, \ldots, z_k) \in \mathsf Z_S(a)$, meaning 
\[
b = z_0m + \varphi_S(0, z_1, \ldots, z_k) = z_0m + a
\]
and thus $b - z_0m = a \in \Ap(S;m)$.  As such, $b \notin \Ap(S; m)$ if and only if $z_0 > 0$.  
\end{proof}

The following strengthens \cite[Proposition~8.19]{numerical} and has a nearly identical proof.  

\begin{lemma}\label{l:bettiapery}
Fix an $m$-centric minimal presentation $\rho$ of $S$, fix $(z,z') \in \rho$, and let $b = \varphi_S(z)$.  For each $i \in \supp(z')$, we have $b - n_i \in \Ap(S;m)$.
\end{lemma}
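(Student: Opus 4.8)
The plan is to verify the two defining conditions of the Apéry set directly: that $b - n_i \in S$ and that $(b - n_i) - m \notin S$. The first is immediate. Since $\rho$ is $m$-centric, $z'$ is $m$-free, so $i \ne 0$ and $n_i \ne m$; and because $i \in \supp(z')$ we have $z'_i \ge 1$, so $z' - e_i \in \NN^{k+1}$ is a factorization witnessing $b - n_i \in S$. The real content is the second condition, which I would establish by contradiction: assume $b - n_i - m \in S$ and fix a factorization $w \in \mathsf Z_S(b - n_i - m)$.

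I would then split into two cases according to whether $b$ lies in the Apéry set. If $b \in \Ap(S;m)$, then $b - m \notin S$, yet $b - m = (b - n_i - m) + n_i$ would be a sum of two elements of $S$ and hence lie in $S$, a contradiction; this case needs no further machinery.

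The substantive case is $b \notin \Ap(S;m)$, which by Lemma~\ref{l:m-centricexists}(b) forces $z_0 > 0$, i.e. the factorization $z$ genuinely uses the multiplicity $m$. Here I would form the auxiliary factorization $\tilde z = w + e_0 + e_i$, which satisfies $\varphi_S(\tilde z) = (b - n_i - m) + m + n_i = b$ and has $\{0, i\} \subseteq \supp(\tilde z)$. The key observation is that $\tilde z$ is simultaneously adjacent in $\nabla_b$ to $z'$ (they share the generator $n_i$, since $z'_i, \tilde z_i \ge 1$) and to $z$ (they share the generator $m = n_0$, since $z_0, \tilde z_0 \ge 1$). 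Consequently $z$ and $z'$ lie in the same connected component of $\nabla_b$. But since $\rho$ is minimal, Theorem~\ref{t:allminimalpresentations} forces the relations of $\rho$ occurring at $b$ to act as the edges of a spanning tree on the components of $\nabla_b$ (there are exactly one fewer of them than there are components, and they render the graph connected), so the two sides of any such relation must lie in distinct components, contradicting $(z,z') \in \rho$.

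The main obstacle is the non-Apéry case, and specifically identifying the correct bridging factorization $\tilde z$: the role of $m$-centricity is precisely that it guarantees, via Lemma~\ref{l:m-centricexists}(b), that $z$ carries a copy of $m$, which is exactly the generator that lets $\tilde z$ reconnect to the component of $z$. Once this is in place, the argument parallels the proof of \cite[Proposition~8.19]{numerical}, with the extra input of $m$-centricity upgrading the conclusion from mere membership in $S$ to membership in $\Ap(S;m)$.
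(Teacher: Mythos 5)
Your proof is correct and follows essentially the same route as the paper's: the case split on $b \in \Ap(S;m)$ versus $b \notin \Ap(S;m)$ matches the paper's split on $z_0 = 0$ versus $z_0 > 0$ (equivalent via Lemma~\ref{l:m-centricexists}(b)), and your bridging factorization $\tilde z = w + e_0 + e_i$ is exactly the argument the paper compresses into the sentence ``since $z$ and $z'$ lie in different connected components of $\nabla_b$, we have $b - m - n_i \notin S$.'' Your write-up simply makes explicit the connectivity contradiction that the paper leaves implicit.
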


\begin{proof}
If $z_0 = 0$, then $b \in \Ap(S;m)$ by Lemma~\ref{l:m-centricexists}(b), so the claim immediately follows.  
On the other hand, if $z_0 > 0$, then since $z$ and $z'$ lie in different connected components of $\nabla_b$, we have $b - m - n_i \notin S$, meaning $b - n_i \in \Ap(S;m)$.  
\end{proof}

\begin{thm}\label{t:m-centric}
If $S$ has Kunz poset $P$ and $\rho$ is an $m$-centric minimal presentation of $S$, then 
\begin{enumerate}[(a)]
\item 
the set $\rho' = \{(\wh z, \wh z') : (z,z') \in \rho \text{ and } z_0 = 0\}$ is a minimal presentation for $P$, and

\item 
the set $\{ \wh z' : (z,z') \in \rho \text{ and } z_0 > 0\}$ consists of exactly one factorization from each outer Betti element of $P$.
\end{enumerate}
\end{thm}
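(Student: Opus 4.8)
The plan is to dispatch part~(a) quickly from results already in hand and then devote the bulk of the work to part~(b). For part~(a), note that Lemma~\ref{l:m-centricexists}(b) identifies, for the $m$-centric presentation $\rho$, the relations with $z_0 = 0$ as exactly those $(z,z')$ with $\varphi_S(z) \in \Ap(S;m)$; moreover for such relations both $z$ and $z'$ are $m$-free, so $\wh z$ and $\wh z'$ are honest truncations. Hence $\rho'$ coincides with the set $\{(\wh z, \wh z') : (z,z') \in \rho,\ \varphi_S(z) \in \Ap(S;m)\}$, which is a minimal presentation of $P$ by the ``moreover'' clause of Theorem~\ref{t:posettosemigroupminpres}. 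Thus part~(a) is essentially immediate.

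For part~(b), I would fix a relation $(z,z') \in \rho$ with $z_0 > 0$ and set $b = \varphi_S(z) \notin \Ap(S;m)$ (Lemma~\ref{l:m-centricexists}(b)); then $z'$ is $m$-free, and Lemma~\ref{l:bettiapery} gives $b - n_i \in \Ap(S;m)$ for each $i \in \supp(z')$, so $\wh z'$ lies in the set $Z$ of Lemma~\ref{l:outerbettipartition}. The task is to upgrade ``$\wh z' \in Z$'' to ``$\wh z'$ lies in an outer Betti element'' and to make the resulting count exact, which I would base on the following description of $\nabla_b$: its unique component $C_0$ containing the $m$-using factorizations (all of which share index~$0$) also contains every $m$-free factorization $(0,w)$ with $w \notin Z$, whereas every other component $C_j$ consists entirely of factorizations $(0,w)$ with $w \in Z$, and $\widehat{C_j} = \{w : (0,w) \in C_j\}$ is a single outer Betti element. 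The first half is short: if $w \notin Z$ then $b - n_i \notin \Ap(S;m)$ for some $i \in \supp(w)$, so $b - n_i$ has an $m$-using factorization $u$, and $u + e_i$ is an $m$-using factorization of $b$ sharing index $i$ with $(0,w)$, forcing $(0,w) \in C_0$.

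The heart of the matter, and the step I expect to be the main obstacle, is showing that each non-$C_0$ truncation $\widehat{C_j}$ is genuinely an outer Betti element, since a connected component of $Z$ need not be one in general (Example~\ref{e:outerbettipartition}). The crucial point is that membership in a non-$C_0$ component forces membership in $Z$: for $w \in \widehat{C_j}$ and $i \in \supp(w)$, writing $p = \varphi_N(w - e_i) = \ol b - p_i \ne \infty$, any $u \in \mathsf Z_N(p)$ satisfies $\varphi_S(0,u) = b - n_i$, so $u + e_i$ is an $m$-free factorization of $b$ sharing index $i$ with $w$; it therefore lies in $C_j$, hence in $Z$, giving $\widehat{C_j} - e_i = \mathsf Z_N(p)$, which is Definition~\ref{d:outerbetti}(i). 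Connectivity (Definition~\ref{d:outerbetti}(ii)) follows because on factorizations of the common value $b$ a $\nabla_b$-edge and a $Z$-edge coincide, as matching decrements both factor $b - n_i$. For the converse I would attach to each outer Betti element $B$ the well-defined value $\beta_B = \varphi_S(0,w)$ for $w \in B$ (Lemma~\ref{l:outerbettipartition}(b)); this is a Betti element, since no $w \in B$ can neighbor an $m$-using factorization (as $w \in Z$ gives $b - n_i \in \Ap(S;m)$ whereas an $m$-using neighbor through $i$ would give $b - n_i \notin \Ap(S;m)$), and tracing any hypothetical $\nabla_b$-path from $B$ toward $C_0$ stays inside $B$ by repeated use of the full-set condition $B - e_i = \mathsf Z_N(p)$ until it meets such a forbidden last edge. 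Hence $B$ lies in a non-$C_0$ component and, being a full $Z$-component, equals some $\widehat{C_j}$.

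Finally, I would invoke Theorem~\ref{t:allminimalpresentations}: at each $b \notin \Ap(S;m)$ the relations of $\rho$ supported there form a spanning tree on the components of $\nabla_b$, and since each such relation joins $C_0$ (through its $m$-using factorization $z$) to an $m$-free $z'$ necessarily in a non-$C_0$ component, the tree is a star centered at $C_0$ with exactly one relation per non-$C_0$ component. By the structural description this contributes exactly one $\wh z' \in \widehat{C_j}$ for each outer Betti element $\widehat{C_j}$ of value $b$; ranging over all non-Apéry Betti elements $b$, and noting that distinct components and distinct values $b$ yield distinct outer Betti elements and distinct factorizations, the set $\{\wh z' : (z,z') \in \rho,\ z_0 > 0\}$ meets each outer Betti element of $P$ exactly once.
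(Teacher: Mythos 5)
Your proof is correct and follows essentially the same route as the paper's: part~(a) via Lemma~\ref{l:m-centricexists} and Theorem~\ref{t:posettosemigroupminpres}, and part~(b) by showing that the connected components of $\nabla_b$ not containing the $m$-using factorizations truncate to outer Betti elements (using Lemma~\ref{l:bettiapery} together with the full-set condition $B - e_i = \mathsf Z_N(p)$) and that, conversely, every outer Betti element arises as such a component of $\nabla_b$ for a Betti element $b \notin \Ap(S;m)$. Your write-up is in fact more explicit than the paper's at the final step, where the minimality criterion of Theorem~\ref{t:allminimalpresentations} forces the relations at each such $b$ to form a star centered at the $m$-using component, yielding exactly one $\wh z'$ per outer Betti element; the paper leaves this count implicit.
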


\begin{proof}
For each $(z,z') \in \rho$ with $z_0 = 0$, since $\rho$ is $m$-centric Lemma~\ref{l:m-centricexists} implies $b = \rho_S(z) \in \Ap(S;m)$, so $z_0' = 0$ as well.  Thus $\wh{\mathsf Z_S (b)} = \mathsf Z_P(\ol b)$ by Theorem~\ref{t:posettosemigroupminpres}.  As~such, $\rho'$~is a minimal presentation for $P$ by Theorem~\ref{t:posettosemigroupminpres}, proving part~(a).  

Next, fix $(z,z') \in \rho$ with $z_0 > 0$ with Betti element $b = \varphi_S(z)$, and let
\[
B = \{z'' : z'' \text{ is in the same connected component of $\nabla_b$ as $z'$}\}.
\]
We wish to show $\wh{B} = \{\wh{z''}: z'' \in B\}$ is an outer Betti element of $P$.  Since $z$ and $z'$ have disjoint support, $z_0'' = 0$ for every $z'' \in B$, which implies $\wh{z}'' \in \mathsf Z_P(\infty)$.  By Lemma~\ref{l:bettiapery}, for each $i \in \supp(B)$, we have $b - n_i = a_j$ for some $a_j \in \Ap(S;m)$, and consequently $B - e_i \subseteq \mathsf{Z}(a_j)$.  For the other inclusion, $e_i + \mathsf Z_S(a_j) \subset \mathsf Z_S(b)$, and all of these factorizations are in the same connected component (as they are all supported on $e_i$).  Therefore, $B - e_i = \mathsf{Z}(a_j)$.  By Theorem~\ref{t:posettosemigroupminpres}, we have $\wh B - e_i =  \mathsf{Z}_P(p)$ for $p \in P$. Clearly $\nabla_{\wh B}$ is connected since every factorization is obtained from the same connected component of $\nabla_b$. Thus, $\wh B$ is an outer Betti element of $P$.

It remains to show that every outer Betti element of $P$ arises as a set $\wh B$ in the preceeding paragraph.  Fix $B \subset \{0\} \times \NN^k $ such that $\wh B$ is an outer Betti element of~$P$.  Since $\nabla_{\wh B}$ is connected, $B \subseteq \mathsf Z(b)$ for some $b \in S$ by Lemma~\ref{l:outerbettipartition}(b).  Since $\wh B \subseteq \mathsf Z_P(\infty)$, $b$ has at least one factorization $z$ with $z_0 > 0$.  Fix $i \in \supp (B)$.  Since $\wh B - e_i = \mathsf Z_P(p)$ for some $p \in P$, the bijection between $\mathsf Z_P(p)$ and $\mathsf Z_S(b - n_i)$ in Theorem~\ref{t:posettosemigroupminpres} guarantees any factorization of~$b$ supported on $i$ lies in $B$ and has first coordinate 0.  As such, $z$~lies in a different connected component of $\nabla_b$ than $B$, meaning $\nabla_b$ is disconnected and $b$ is a Betti element of~$S$.  
\end{proof}

\begin{cor}\label{c:minpressize}
Fix two numerical semigroups $S$ and $S'$ in some face $F \subset P_m$.  If $\rho$ and $\rho'$ are minimal presentations of $S$ and $S'$, respectively, then $|\rho| = |\rho'|$.  
\end{cor}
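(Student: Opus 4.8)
The plan is to reduce the cardinality of any minimal presentation of a numerical semigroup to two quantities that depend only on its Kunz poset, and then exploit that $S$ and $S'$ share a Kunz poset. First I would note that since $S$ and $S'$ both lie in the face $F$, Theorem~\ref{t:groupconefacelattice}(b) guarantees they have a common Kunz poset $P$ and hence a common Kunz nilsemigroup $N$; in particular they have the same set of outer Betti elements. Because all minimal presentations of a fixed numerical semigroup have equal cardinality, I may replace $\rho$ and $\rho'$ by $m$-centric minimal presentations, which exist by Lemma~\ref{l:m-centricexists}(a), without changing $|\rho|$ or $|\rho'|$.

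Next I would partition an $m$-centric minimal presentation $\rho$ according to whether $z_0 = 0$ or $z_0 > 0$ and apply Theorem~\ref{t:m-centric}. Part~(a) identifies the relations with $z_0 = 0$, after deleting the leading coordinate, with a minimal presentation $\rho_0$ of $P$, so the number of such relations is $|\rho_0|$. Part~(b) puts the relations with $z_0 > 0$ in bijection with the outer Betti elements of $P$, so their number is the number $\beta$ of outer Betti elements of $P$. This yields $|\rho| = |\rho_0| + \beta$, and the identical decomposition applied to $\rho'$ gives $|\rho'| = |\rho_0'| + \beta$ with the \emph{same} $\beta$, since $S$ and $S'$ share $P$.

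It then remains to show $|\rho_0| = |\rho_0'|$, i.e., that all minimal presentations of the Kunz poset $P$ have equal cardinality. Here I would invoke Theorem~\ref{t:allminimalpresentations} for the finite commutative nilsemigroup $N$: in any minimal generating set of $\ker\varphi_N$, the number of relations involving factorizations of a fixed $q \in N$ equals $c(\nabla_q) - 1$, where $c(\cdot)$ denotes the number of connected components of the factorization graph. Since a minimal presentation of $P$ is obtained, per Definition~\ref{d:posetminimalpresentation}, by discarding exactly the relations at $\infty$, its cardinality equals $\sum_{p \in P}\bigl(c(\nabla_p) - 1\bigr)$, a quantity determined by $N$ alone. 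Thus $|\rho_0| = |\rho_0'|$, and combining with the previous paragraph gives $|\rho| = |\rho'|$.

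The substantive input is Theorem~\ref{t:m-centric}, which separates the ``Apéry part'' of the presentation (recoverable from $P$) from the part coming from Betti elements outside the Apéry set (governed by the outer Betti elements of $P$); given that theorem, the corollary is essentially bookkeeping. The point I expect to require the most care is the bijectivity in part~(b): I must know that distinct relations with $z_0 > 0$ cannot collapse onto a single outer Betti element, so that $\beta$ genuinely \emph{counts} the relations with $z_0 > 0$ rather than merely bounding them. I anticipate this follows from the $m$-centricity constraint together with the Apéry-membership forced by Lemma~\ref{l:bettiapery}, which pins down the Betti element and connected component attached to each such relation.
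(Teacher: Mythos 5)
Your skeleton is the same as the paper's (whose entire proof is: the two semigroups share a Kunz poset, now apply Theorem~\ref{t:m-centric}): pass to $m$-centric minimal presentations via Lemma~\ref{l:m-centricexists}(a), split the relations according to $z_0 = 0$ versus $z_0 > 0$, and count the second block by outer Betti elements. The genuine gap is in your third paragraph, where you justify $|\rho_0| = |\rho_0'|$ by invoking Theorem~\ref{t:allminimalpresentations} for the nilsemigroup $N$. Despite the paper's loose phrasing ``finitely generated semigroup,'' that theorem is a compilation of results proved for cancellative monoids, and it is \emph{false} for Kunz nilsemigroups; the failure is exactly at the nil. Concretely, take $m = 2$, so that $N = \{0, 1, \infty\}$ with $1 \oplus 1 = \infty$ and $\varphi_N \colon \NN \to N$ (this is the Kunz nilsemigroup shared by every numerical semigroup of multiplicity $2$). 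The classes of $\ker\varphi_N$ are $\{0\}$, $\{1\}$, and $\{2,3,4,\ldots\}$, and $\nabla_\infty$ is a complete graph, hence connected; your claim that any minimal generating set has exactly $c(\nabla_q) - 1$ relations at each $q \in N$ would force the empty set to generate the nontrivial congruence $\ker\varphi_N$. Worse, $\{(2,3)\}$ and $\{(2,4),(3,4)\}$, viewed as subsets of $\NN \times \NN$, are both containment-minimal generating sets of $\ker\varphi_N$, of cardinalities $1$ and $2$, so even the blanket assertion that minimal generating sets of $\ker\varphi_N$ have equal cardinality fails. The statement you actually need -- that the count $c(\nabla_p) - 1$ holds at every \emph{non-nil} $p$ -- is true, but it does not follow from the cited theorem; it requires the partial cancellativity of $N$ from Theorem~\ref{t:kunznilsemigroup} (this is what lets shared-support factorizations of a non-nil element descend to factorizations of a common element, the inductive engine behind the graph characterization), and that argument is precisely the content you would have to supply.

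There is a cleaner repair, which is what the paper's terse proof implicitly does: never apply the graph characterization to $N$ at all. By Lemma~\ref{l:m-centricexists}(b), the relations of an $m$-centric $\rho$ with $z_0 = 0$ occur at Betti elements $b \in \Ap(S;m)$; by Theorem~\ref{t:allminimalpresentations} applied to $S$ itself (cancellative, so legitimate), their number is $\sum_b (c(\nabla_b) - 1)$; and Theorem~\ref{t:posettosemigroupminpres} identifies $\mathsf Z_S(b)$ with $\{0\} \times \mathsf Z_P(\ol b)$, so each graph $\nabla_b$, and hence this count, depends only on $P$. Combined with your count $\beta$ of the $z_0 > 0$ relations, this gives $|\rho| = |\rho'|$. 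Finally, the injectivity you flag in part~(b) is a legitimate concern but resolves as you anticipated: in an $m$-centric presentation, every relation at a Betti element $b \notin \Ap(S;m)$ has its left factorization in the single connected component of $\nabla_b$ containing all factorizations with positive $0$-th coordinate, so two such relations whose right factorizations lay in a common component (equivalently, in a common outer Betti element) would produce a repeated edge among the $c(\nabla_b) - 1$ edges required to connect $\nabla_b$, contradicting Theorem~\ref{t:allminimalpresentations} applied to $S$.
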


\begin{proof}
By Theorems~\ref{t:kunzlatticepts} and~\ref{t:groupconefacelattice}(b), $S$ and $S'$ have identical Kunz poset, so apply Theorem~\ref{t:m-centric}.  
\end{proof}


When presented with a face $F \subseteq P_m$, the following corollary of Theorem~\ref{t:m-centric} gives rise simultaneously to a minimal presentation of every numerical semigroup on $F$.  We first see an example.  

\begin{example}\label{e:kunzcoordformula}
Consider the Kunz poset $P$ from Example~\ref{e:outerbettimerge}.  The given minimal presentation for $S = \<9, 20, 30, 35\>$ includes the trade $(5,0,1,0) \sim (0,2,0,1)$, but in fact, any numerical semigroup $S'$ with multiplicity $m = 9$ and Kunz poset $P$ has a trade of the form $(\ell, 0, 1, 0) \sim (0, 2, 0, 1)$ for some $\ell$.  Moreover, if $S'$ has Kunz coordinates $x \in P_9$, 
then 
\[
\ell \cdot 9 + 1 \cdot (9x_3 + 3) = 2 \cdot (9x_2 + 2) + 1 \cdot (9x_8 + 8)
\]
and thus $\ell = 1 + 2x_2 - x_3 + x_8$.  By Theorem~\ref{t:m-centric}, we obtain
\[
\begin{array}{r@{\qquad}r@{\qquad}r}
(0,3,0,0) \sim (0,0,2,0),
&
(\ell_1,1,0,0) \sim (0,0,1,1),
&
(\ell_3,0,1,0) \sim (0,2,0,1),
\\
(0,2,1,0) \sim (0,0,0,2),
&
(\ell_2,0,0,1) \sim (0,1,2,0),
&
(\ell_4,0,0,0) \sim (0,3,1,0)\phantom{,}
\end{array}
\]
as an $m$-centric minimal presentation for $S'$, where
$$
\ell_1 = 1 - x_2 + x_3 + x_8,
\quad
\ell_2 = x_2 + 2x_3 - x_8,
\quad
\ell_3 = 1 + 2x_2 - x_3 + x_8,
\quad
\ell_4 = 1 + 3x_2 + x_3
$$
are affine linear combinations of the Kunz coordinates of $S'$.  
\end{example}

\begin{cor}\label{c:kunzcoordformula}
Let $L$ denote the set of affine linear functions $\ell:\RR^{m-1} \to \RR$.  Fix a face $F \subseteq P_m$ containing numerical semigroups, and let $p_1, \ldots, p_k \in \ZZ_m$ be the atoms of the Kunz poset $P$ associated to~$F$.  There exists a set of functions
\[
\rho_F \subseteq (L \times \NN^k) \times (\{0\} \times \NN^k),
\]
on $\RR^{m-1}$ such that
for every numerical semigroup $S$ in $F$, the evaluation 
\[
\rho_F(x) = \{((\ell(x), z), (0, z')) : ((\ell, z), (0, z')) \in \rho_F\}
\]
at the Kunz tuple $x$ of $S$ is a minimal presentation of $S$. 
\end{cor}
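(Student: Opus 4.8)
The plan is to read off the full combinatorial skeleton of an $m$-centric minimal presentation from Theorem~\ref{t:m-centric}, observe that this skeleton is intrinsic to the Kunz poset $P$ (hence constant across all numerical semigroups on $F$ by Theorem~\ref{t:groupconefacelattice}(b)), and isolate the only semigroup-dependent data, namely the multiplicity count $z_0$ appearing on each left-hand factorization. First I would fix, once and for all, a minimal presentation $\rho'$ of $P$, and, for each outer Betti element $B$ of $P$, a distinguished factorization $w_B \in B$ together with a poset factorization $u_B \in \mathsf Z_P(\ol{w_B})$. These live in $\NN^k$ and depend only on $P$, so they may be selected uniformly over $F$.

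Next I would produce the affine functions, which is the genuinely new content. For the relation attached to $B$, the evaluated left factorization must read $(z_0, u_B)$ with $\varphi_S(z_0, u_B) = \varphi_S(0, w_B)$, so that $z_0 m = \sum_i \big((w_B)_i - (u_B)_i\big) n_i$. Substituting the defining relation $n_i = m x_{p_i} + p_i$ between the generators of $S$ and its Kunz coordinates, then dividing by $m$, yields
\[
z_0 \;=\; \ell_B(x) \;:=\; \sum_{i=1}^k \big((w_B)_i - (u_B)_i\big)\, x_{p_i} \;+\; \frac{1}{m}\sum_{i=1}^k \big((w_B)_i - (u_B)_i\big)\, p_i,
\]
an affine linear function of the Kunz tuple $x$; its constant term is an integer because $\ol{u_B} = \ol{w_B}$ forces $\sum_i\big((w_B)_i - (u_B)_i\big)p_i \equiv 0 \bmod m$. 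I would then take $\rho_F$ to consist of the pairs $((0, u), (0, w))$ for each $(u, w) \in \rho'$ (using the constant function $\ell = 0$), together with $((\ell_B, u_B), (0, w_B))$ for each outer Betti element $B$.

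It remains to verify that, for every numerical semigroup $S$ on $F$ with Kunz tuple $x$, the evaluation $\rho_F(x)$ is a minimal presentation of $S$. The $m$-free relations land at Apéry-set Betti elements, and there the identity $\mathsf Z_S(a) = \{(0,z) : z \in \mathsf Z_P(\ol a)\}$ from Theorem~\ref{t:posettosemigroupminpres} makes $\nabla_{\mathsf Z_S(a)}$ isomorphic to $\nabla_{\mathsf Z_P(\ol a)}$, so minimality of $\rho'$ over $P$ transfers verbatim. Each evaluated outer-Betti relation is a genuine relation of $S$ because $\ell_B(x)$ equals the correct positive multiplicity count $z_0$ computed above (positivity since $\ol{w_B}$ is the class of a Betti element lying above its Apéry representative).

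The main obstacle is establishing minimality at the remaining Betti elements, and in particular accommodating the collision phenomenon of Example~\ref{e:outerbettimed}, where distinct outer Betti elements of $P$ collapse onto one Betti element of $S$ for some semigroups on $F$ but not others. The cleanest route I see is to run the synthesis direction of Theorem~\ref{t:m-centric}: one must show that a single uniform choice of one factorization $w_B$ per outer Betti element, paired with a factorization of $\ol{w_B}$ and the forced number of copies of $m$, satisfies the criterion of Theorem~\ref{t:allminimalpresentations} at every Betti element of every $S$ on $F$ simultaneously. Since the connected-component structure of $\nabla_{\mathsf Z_S(b)}$ at a Betti element $b$ is itself dictated by the outer Betti elements mapping to the residue $\ol b$ together with the poset factorizations of $\ol b$ — all of which are $P$-intrinsic and independent of $S$ — the count-and-connectivity conditions can be checked once at the poset level; carrying out this reduction carefully is where the real work lies.
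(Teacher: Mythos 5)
Your proposal is the paper's proof: the same $\rho_F$, built from a fixed minimal presentation of $P$ together with one chosen factorization $w_B \in B$ and partner $u_B \in \mathsf Z_P(\ol{w_B})$ per outer Betti element, with your $\ell_B$ expanding to exactly the paper's affine function $\sum_{i} \bigl((w_B)_i - (u_B)_i\bigr)\bigl(x_{p_i} + \tfrac{p_i}{m}\bigr)$, and the paper's entire verification of minimality is precisely the citation of Theorems~\ref{t:allminimalpresentations} and~\ref{t:m-centric} that you invoke (so the ``real work'' you flag at the end is elided there as well). One correction to your closing sketch: the component structure of $\nabla_b$ is \emph{not} $S$-independent --- which outer Betti elements collide at a single Betti element genuinely varies with $S$, as Example~\ref{e:outerbettimed} shows --- but this does not matter, because all factorizations of $b$ with positive multiplicity coordinate form one common component of $\nabla_b$, so each evaluated relation $((\ell_B(x), u_B), (0, w_B))$ joins the component $\{0\} \times B$ to that common component, and the count-plus-connectivity criterion of Theorem~\ref{t:allminimalpresentations} is met under every collision pattern.
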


\begin{proof}
Fix a minimal presentation~$\rho'$ of $P$.  For each outer Betti element $B$ of $P$, choose $z' \in B$, choose $z \in \mathsf{Z}_P(\ol{z}')$, define a function $\ell:\RR^{m-1} \to \RR$ by
$$\ell(x) = \sum_{i=1}^{k} z'_i\big(x_{p_i} + \frac{p_i}{m}\big) - \sum_{i=1}^k z_i\big(x_{p_i} + \frac{p_i}{m}\big)$$
and define $r_B = ((\ell,z), (0,z'))$.  Finally, let 
$$\rho_F = \{((0,z),(0,z')): (z,z') \in \rho' \} \cup \{r_B : B \text{ is an outer Betti elements of $P$}\}.$$
By Theorems~\ref{t:allminimalpresentations} and~\ref{t:m-centric}, $\rho_F(x)$ is a minimal presentation of the numerical semigroup~$S$ with Kunz tuple $x$.  
\end{proof}

\begin{remark}\label{r:minpresalgorithm}
One consequence of Corollary~\ref{c:kunzcoordformula} is a purely combinatorial algorithm for computing the minimal presentation of a numerical semigroup, wherein outer Betti elements are computed with Lemma~\ref{l:outerbettipartition}.  Used in conjuction with \cite[Algorithm~7.1]{kunzfaces1}, a refinement of Wilf's circle of lights theorem~\cite{wilfconjecture}, Corollary~\ref{c:kunzcoordformula} yields an algorithm whose worst-case runtime depends only on the multiplicity, since there are only finitely many possible Kunz posets for each fixed $m$.  

We note that this algorithm itself is not new.  
The \texttt{GAP} package \texttt{numericalsgps}~\cite{numericalsgpsgap}, one of the primary packages for numerical semigroup computations, offers 2 primary methods for computing minimal presentations:\ one uses Gr\"obner bases, while the other uses the Ap\'ery set and factorization graphs~\cite{compoverview}.  The algorithm described here is effectively equivalent to the latter method, and one would expect an implementation to have comparable runtime.  However, the formulation here (i) identifies that the runtime can be bounded in terms of $m$, and (ii) depends only on the Kunz poset~$P$ of $S$, in that it simultaneouly obtains a minimal presentation for any numerical semigroup with Kunz poset $P$.  
It would be interesting to locate the optimal big-$O$ for this algorithm, and determine which Kunz posets demand maximal runtime.  
%
\end{remark}


\begin{remark}\label{r:minpressizem4m5}
Fix a numerical semigroup $S$ with multiplicity $m$, and fix a minimal presentation $\rho$ for $S$.  It is known \cite[Corollary~8.27]{numerical} that $|\rho| \le \binom{m}{2}$, with equality if and only if $\mathsf e(S) = m$.  With Theorem~\ref{d:m-centric}, we can obtain a full list of the possible values of $|\rho|$, and we immediately see that not all cardinalities strictly less than $\binom{m}{2}$ are possible.  For example, if $m = 4$, then $|\rho| \in \{1, 2, 3, 6\}$, and if $m = 5$, then $|\rho| \in \{1, 2, 3, 5, 6, 10\}$.  It remains an interesting question to determine which cardinalities are missing.  Aside from some progress in~\cite{highembdim}, Question~\ref{q:minpressizes} remains wide open in general.  
\end{remark}

\begin{question}\label{q:minpressizes}
Given a multiplicity $m$, what are the possible cardinalities of a minimal presentation of a numerical semigroup $S$ with multiplicity $m$?  
\end{question}

\section{The \texttt{Sage} package \texttt{KunzPoset}}
\label{sec:sagepackage}

Throughout the present project, we developed a \texttt{Sage} package \texttt{KunzPoset} that provides an abstract class for Kunz posets~\cite{kunzposetsage}.  Much of the functionality discussed in this paper and its predecessors \cite{kunzfaces2,kunzfaces1} has been implemented, including factorizations and minimal presentations, as well as more general Kunz polyhedron functionality, such as locating semigroups on a given face (equivalently, with a given Kunz poset).  

The \texttt{KunzPoset} object can be constructed from numerous types of input data, including a hyperplane description of the corresponding face, a numerical semigroup with that Kunz poset (specified via a list of generators, Kunz coordinates, or an Apery set, or as an instance of the \texttt{NumericalSemigroup} class), or a poset (as a list of cover relations or an existing \texttt{FinitePoset} object).  
Some of these require the user to install the \texttt{GAP} package \texttt{numericalsgps}~\cite{numericalsgpsgap} as well as the \texttt{NumericalSemigroup.sage} package~\cite{numericalsgpssage}.
It is also possible to enumerate all Kunz posets of a given Kunz polyhedron~$P_m$, using face lattice output from the \texttt{normaliz} package~\cite{normaliz3}.  

We close by providing a brief snippet of sample code.  
The poset constructed therein appeared in Examples~\ref{e:groupconefacelattice},~\ref{e:minpres} and~\ref{e:posetdimension}, and the output is as expected.  
For complete documentation, we direct the reader to the package source, available at
\begin{center}
\url{https://github.com/coneill-math/kunzpolyhedron}
\end{center}
with hosting provided by Github. 
\begin{verbatim}
sage: load("./KunzPoset.sage")
sage: import pprint  # Only necessary for printing purposes
sage: hplanes = [[ 0, 0, 2, 0, 0,-1, 0], [ 0,-1, 1, 0, 0, 0, 1],
....:            [-1, 0, 0, 1, 1, 0, 0], [ 0, 0, 0, 0, 0,-1, 2],
....:            [-1, 0, 1, 0, 0, 1, 0], [-1, 1, 0, 0, 0, 0, 1]]
sage: P = KunzPoset(m=8, hyperplane_desc=hplanes)
sage: pprint.pprint(P.Factorization())
{0: [[0, 0, 0, 0]],
 1: [[3, 0, 0, 0], [1, 0, 0, 2], [0, 1, 1, 0]],
 2: [[1, 0, 0, 1]],
 3: [[1, 0, 0, 0]],
 4: [[0, 1, 0, 0]],
 5: [[0, 0, 1, 0]],
 6: [[2, 0, 0, 0], [0, 0, 0, 2]],
 7: [[0, 0, 0, 1]]}
sage: print(P.BettiMatrix())
[ 2  0  0 -2]
[ 3 -1 -1  0]
sage: print(P.Dimension())
2
\end{verbatim}


\end{document}